\newtheorem{theorem}{Theorem}
\newtheorem{lemma}[theorem]{Lemma}
\newtheorem{proposition}[theorem]{Proposition}
\theoremstyle{definition}
\newtheorem{definition}[theorem]{Definition}
\begin{document}

\title{ Plane Partition Polynomial Asymptotics  }
\author{Robert P. Boyer}
\address{Department of Mathematics, Drexel University USA}
\email{rboyer@drexel.edu}
\author{Daniel T. Parry}
\address{Department of Mathematics, Drexel University USA}
\email{dtp23@drexel.edu}

\subjclass{Primary: 11C08  Secondary: 11M35, 30C55, 30E15  }
\keywords{Plane partition, polynomials, asymptotics, circle method, trilogarithm, phase}

\begin{abstract}
The plane partition polynomial $Q_n(x)$ 
is the polynomial of degree $n$ whose coefficients count the number of plane partitions of $n$
indexed by their trace.
Extending classical work of E.M. Wright, we develop the asymptotics of these polynomials inside the unit disk using the circle method. 
\end{abstract}
\maketitle

\section{Introduction}

A plane partition $\pi$ of a positive integer $n$ is an
array $[ \pi_{i,j}]$ of nonnegative integers such that $\sum \pi_{i,j} =n$ while its trace is the sum of the diagonal entries $\sum \pi_{i,i}$.
The asymptotics of the plane partition numbers $PL(n)$, the number of all plane partitions of $n$, was found by Wright \cite{WrightPlanePartitions}  in 1931
using the circle  and saddle point methods.
In this paper, we study the asymptotics of polynomial versions  $Q_n(x)$ of the plane partitions of $n$ given by

\begin{definition}\label{def:Q_n}
Let
$Q_n(x)$
be the $n$-th degree polynomial given by $ \sum_{k=1}^n pp_k(n) x^k$ where $pp_k(n)$ is the number of plane partitions of $n$ with trace $k$.
These  polynomials  have  generating function
\begin{equation}\label{eq:generating_function}
P(x,u) = \prod_{m=1}^\infty \frac{ 1}{ (1- xu^m)^m} = 1 + \sum_{n=1}^\infty Q_n(x) u^n 
\end{equation}
(see \cite{Andrews_book,Stanley}). 
\end{definition}

In adapting  the circle method to develop the asymptotics of these polynomials  (see Section \ref{section:asymptotics}), 
we needed  to introduce the sequence  $\{L_k(x)\}$  of  functions 
to describe the dominant contributions to their asymptotics  where
\begin{equation}\label{def:L_k}
L_k(x) = \frac{1}{k} \, \sqrt[3]{ 2 Li_3(x^k) }
\end{equation}
and $Li_3(x)$ is the trilogarithm function given by $\sum_{n=1}^\infty x^n / n^3$.

An important  first step in the asymptotic analysis of the polynomials was obtained in our paper
\cite{Boyer_Parry_phase} where we determined exactly when $\Re L_m(x)$ ($m=1,2)$
dominates $\Re L_k(x)$, for $k \neq m$, inside the unit disk (see  \cite{Boyer_Parry_phase}).

A primary motivation for us to develop the asymptotics was to find the limiting behavior of the zeros
of the plane partition polynomials which is described in detail in our paper  \cite{Boyer_Parry_EJC}.

\section{Factorization of the Generating Function $P(x,u)$}

We are interested in the behavior of $P(x,u)$ in a neighborhood of $u= e^{ 2 \pi i h/ k}$ inside the unit disk $\mathbb D$, where
$h$ and $k$ are relatively prime. To start
 we expand the logarithm of the generating function $\ln P(w,u) $
\[
\ln P(w,u) = \sum_{\ell=1}^\infty \frac{ x^\ell}{  \ell} \frac{ u^\ell }{ (1-u^\ell)^2} .
\]
for $x,u \in {\mathbb D}$.
Next with  $u = e^{ - w + 2 \pi i h/k}$, with  $\Re(w) >0$,
we  introduce two functions 
\begin{align}\label{eq:defAB}
A_{h,k}(x,w)   = 
\sum_{  k\nmid \ell}
 \frac{x^\ell}{ \ell} \frac{ e^{-\ell w+ 2 \pi i \ell h/k}     }{  (1- e^{-\ell w+ 2 \pi i \ell h/k  })^2  }        , \quad
B_{h,k}(x,w)=
\sum_{\ell=1}^\infty \frac{ x^{k \ell}  }{ k \ell} \frac{ e^{ - \ell k w}  }{ ( 1- e^{ - \ell k w} )^2  }.
\end{align}
Then the generating function decomposes as
\[
\ln[  P( x, e^{ - w + 2 \pi i h/k} ) ]
=
A_{h,k}(x, x , w) + B_{ h, k}(x,w) .
\]
Note that when  $k=1$, $A_{h,k}(x,w)=0$ and that
\[
A_{h,k}(x,  0) = 
- \frac{1}{ 4}  \sum_{ k \nmid \ell} \frac{ x^\ell}{ \ell} \csc^2(  \pi \ell h/k) , \quad k \geq 2.
\]

 We need three additional functions $\Psi_{h,k}( x,w)$, $\omega_{h,k,n}( x)$, and $g_{h,k}(x,w)$ where
\begin{equation}\label{eq:Psi}
\Psi_{h,k}( x,w) = \frac{   Li_3(x^k)}{ k^2 w} ,
\end{equation}
\begin{equation}\label{eq:omega}
\ln[ \omega_{h,k,n}( x) ]
=
\begin{cases}
\frac{1}{12k} \ln( 1-x^k) - A_{h,k}(x, 0) - 2 \pi i n h/k , & k \geq 2
\\
\frac{1}{12} \ln( 1-x), & k=1 ,
\end{cases}
\end{equation}

\begin{equation}\label{eq:g_hk}
g_{h,k}(x,w) = 
\begin{cases}
\left[ A_{h,k}(x, w) - A_{h,k}(x,0) \right]
+
\left[ B_{h,k}( x,w ) - \Psi_{h,k}(x,w) - \frac{1}{12k} \ln (1-x^k) \right] , & k \geq 2
\\
B_{0,1}(x,w) - \Psi_{0,1}(x,w) - \frac{1}{12} \ln(1 -x), & k=1 .
\end{cases}
\end{equation}

By construction, $P(x,e^{-w + 2 \pi i h/k})$ admits a factorization as follows.
\begin{proposition} \label{prop:factorization}
Let $h,k, n$ be nonnegative integers such that $(h,k)=1$. If $ \Re w >0$ and $x \in {\mathbb D}$,
then the generating function $P(x, e^{ - w + 2 \pi i h/k})$ factors as
\begin{align*}
P(x, e^{ - w + 2 \pi i h/k})
&=
\omega_{h,k,n}(x) e^{ 2 \pi  i n  h/k} e^{\Psi_{h,k}(x,w)} e^{g_{h,k}(x,w)   },
\end{align*}
where $\omega_{h,k,n}(x)$, $\Psi_{h,k}(x,w)$ and $g_{h,k}(x,w)$ are given in  equations
(\ref{eq:Psi}), (\ref{eq:omega}),  (\ref{eq:g_hk}).
\end{proposition}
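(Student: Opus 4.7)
The plan is to prove the factorization by taking logarithms of both sides and comparing with the decomposition $\ln P(x, e^{-w + 2\pi i h/k}) = A_{h,k}(x,w) + B_{h,k}(x,w)$ already displayed just above the proposition. The content of the proposition is really a tautology once the right-hand side is unwound: the auxiliary functions $\omega_{h,k,n}(x)$, $\Psi_{h,k}(x,w)$, and $g_{h,k}(x,w)$ have been defined precisely so that their logarithms, together with the phase $2\pi i n h/k$, reconstruct $A_{h,k}(x,w) + B_{h,k}(x,w)$.

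For $k \geq 2$, I would substitute the definitions (\ref{eq:Psi}), (\ref{eq:omega}), and (\ref{eq:g_hk}) into the expression
\[
\ln\omega_{h,k,n}(x) + \frac{2\pi i n h}{k} + \Psi_{h,k}(x,w) + g_{h,k}(x,w)
\]
and verify term-by-term that the following pieces cancel: the $\pm\frac{1}{12k}\ln(1-x^k)$ contributions between $\ln\omega$ and the second bracket of $g$; the $\pm 2\pi i n h/k$ phase terms; the trilogarithmic factor $\pm\Psi_{h,k}(x,w)$ between $g$ and the separately added $\Psi$; and the $\pm A_{h,k}(x,0)$ pieces between $\ln\omega$ and the first bracket of $g$. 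What remains is exactly $A_{h,k}(x,w) + B_{h,k}(x,w)$, and exponentiating gives the claimed product. The case $k=1$ is a simpler special case: the sum defining $A_{0,1}$ is empty, so the identity reduces to matching $\ln P(x, e^{-w}) = B_{0,1}(x,w)$ against $\ln\omega_{0,1,n}(x) + \Psi_{0,1}(x,w) + g_{0,1}(x,w)$, which is immediate from the definitions.

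The only analytic point to justify is the splitting $\ln P = A_{h,k} + B_{h,k}$ used above. For $\Re w > 0$ and $x \in \mathbb{D}$, writing $u = e^{-w + 2\pi i h/k}$ one has $|u| = e^{-\Re w} < 1$, so the series $\sum_{\ell=1}^\infty \frac{x^\ell}{\ell}\, \frac{u^\ell}{(1-u^\ell)^2}$ converges absolutely, and rearranging its terms according to whether or not $k \mid \ell$ is legitimate. I do not foresee any genuine obstacle in the argument; the entire substance of the proposition lies in the bookkeeping built into the definitions of $\omega_{h,k,n}$, $\Psi_{h,k}$, and $g_{h,k}$, which will be used in the subsequent asymptotic analysis to isolate the dominant pole term $\Psi_{h,k}(x,w) = Li_3(x^k)/(k^2 w)$ from the analytic remainder $g_{h,k}$.
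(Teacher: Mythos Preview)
Your proposal is correct and matches the paper's approach exactly: the paper itself offers no proof beyond the phrase ``By construction,'' and your term-by-term verification of the cancellations among the definitions (\ref{eq:Psi}), (\ref{eq:omega}), (\ref{eq:g_hk}) is precisely what that phrase encodes. The only point worth flagging is that the displayed formula (\ref{eq:Psi}) in the paper appears to contain a typo (compare with the $Li_3(x^k)/(k^3 w^2)$ appearing in Lemma~\ref{lemma:B_bound} and in the integrand of $I_{h,k,n}$), but since your argument treats $\Psi_{h,k}$ as a formal placeholder that is added and subtracted, this does not affect the validity of your proof of the factorization.
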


The key in using this factorization for the circle method  is the following bound.
\begin{proposition}\label{prop:g_bound}
Assume that $0<|x|<1$ and $\Re w>0$.
(a)
If $\Im w \neq 0$ as well, then
there exists $M>0$ such that
\begin{align}  \label{eq:g_hk_inequality}
 | g_{h,k}(x,w)  |
&  \leq
\frac{  2 |w|}{ 1- |x|     } 
\left[
k^3 + \frac{  |x|^{ \frac{ \pi}{ k | \Im w|}      }    }{  1- e^{ - \frac{ \pi \Re w}{k | \Im w|}     } }
\right]
\\ \nonumber
 &+ \quad
\frac{1}{ 1- |x|^2}
 \left[
 M |w|^2 k +
\left(
\frac{2}{  (1- e^{   - \Re w \frac{ \pi}{ |w|}          } )^3        }        +1 
\right)
  |x|^{ \frac{ \pi }{ |w|}       }
\right]   .
\end{align}
(b)
If $w$ is real and positive,   then
there exists $M>0$ such that
\begin{align*} 
 | g_{h,k}(x,w)  |
&  \leq
\frac{  2 |w|}{ 1- |x|     } 
k^3 
 +
\frac{1}{ 1- |x|^2}
 \left[
 M |w|^2 k +
\left(
\frac{2}{  (1- e^{   - \Re w \frac{ \pi}{ |w|}          } )^3        }        +1 
\right)
  |x|^{ \frac{ \pi }{ |w|}       }
\right]   .
\end{align*}
\end{proposition}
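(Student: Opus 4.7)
The plan is to bound the two brackets in the definition of $g_{h,k}(x,w)$ separately. Introduce the auxiliary function $F(z) = e^{-z}/(1-e^{-z})^2$, which has double poles at each $z \in 2\pi i\mathbb{Z}$ and admits the Laurent expansion
\begin{equation*}
F(z) = \frac{1}{z^2} - \frac{1}{12} + \phi(z),
\end{equation*}
with $\phi$ entire and $\phi(z) = O(z^2)$ as $z \to 0$. Then the summand of $A_{h,k}(x,w) - A_{h,k}(x,0)$ takes the form $(x^\ell/\ell)[F(\ell w + 2\pi i \ell h/k) - F(2\pi i \ell h/k)]$ for $k\nmid \ell$. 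Substituting the Laurent expansion inside $B_{h,k}(x,w)$, one identifies $\sum_\ell (x^{k\ell}/(k\ell))\cdot (\ell k w)^{-2}$ with the desingularizing term matching $\Psi_{h,k}(x,w)$ and $\sum_\ell (x^{k\ell}/(k\ell))\cdot(-1/12) = (12k)^{-1}\ln(1-x^k)$, so that the $B$-piece reduces to $\sum_{\ell\geq 1}(x^{k\ell}/(k\ell))\,\phi(\ell k w)$.

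For the $A$-piece, the restriction $k\nmid \ell$ keeps $2\pi i\ell h/k$ at distance at least $2\sin(\pi/k)$ from the singular set $2\pi i\mathbb{Z}$, so $F$ is analytic on a corresponding $1/k$-neighborhood with $|F'|\leq Ck^3$ there. Split the sum at $\ell_0 = \lfloor \pi/(k|\Im w|)\rfloor$, the largest index for which the segment from $2\pi i\ell h/k$ to $\ell w + 2\pi i\ell h/k$ remains inside that safe region. For $\ell \leq \ell_0$, the fundamental theorem of calculus along this segment gives $|F(\ell w + 2\pi i\ell h/k) - F(2\pi i\ell h/k)| \leq \ell|w|\cdot Ck^3$, so the corresponding partial sum is bounded by $2|w|k^3/(1-|x|)$, which is the first term in the bracket. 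For $\ell > \ell_0$, bound $|F|$ directly using $\Re w > 0$ and sum the resulting geometric series starting at $|x|^{\ell_0} = |x|^{\pi/(k|\Im w|)}$; the additional factor $(1-e^{-\pi\Re w/(k|\Im w|)})^{-1}$ records the geometric decay of the denominator in $\ell$.

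For the $B$-piece, split at the new cutoff $\ell_1 = \lfloor \pi/(k|w|)\rfloor$. For $\ell \leq \ell_1$, $|\ell k w|\leq \pi$ and the Laurent estimate $|\phi(\ell k w)| \leq M(\ell k w)^2$ applies, so the contribution is bounded by $Mk|w|^2\sum_\ell \ell|x|^{k\ell}$; using $|x|^k\leq |x|^2$ for $k\geq 2$ (and the analogous estimate for $k=1$), this is $\leq Mk|w|^2/(1-|x|^2)$, producing the $M|w|^2 k$ term. For $\ell > \ell_1$, use the crude decomposition $|\phi(z)|\leq |F(z)| + |z|^{-2} + 1/12$: the constant and inverse-quadratic pieces together contribute the "$+1$" in the bracket, while $|F(z)| = e^{-\Re z}/|1-e^{-z}|^2$ combined with the lower bound $|1-e^{-\ell k w}|\geq 1-e^{-\pi\Re w/|w|}$ gives the factor $2(1-e^{-\pi\Re w/|w|})^{-3}$ after summing an additional $\ell$-dependent weight. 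In both cases the geometric tail starts at $|x|^{k\ell_1} = |x|^{\pi/|w|}$.

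The principal obstacle is the explicit bookkeeping: the denominators $(1-e^{-\pi\Re w/(k|\Im w|)})^{-1}$ and $(1-e^{-\pi\Re w/|w|})^{-3}$ emerge only after carefully matching each geometric ratio to its respective cutoff, and the $k^3$ factor in the $A$-estimate demands uniform control of three derivatives of $F$ near its poles. Part (b) is immediate from (a): when $w$ is real, $|\Im w|=0$ sends $\ell_0$ to $\infty$, emptying the "large-$\ell$" contribution to $A$ and causing the middle term in the bracket to vanish, while every other estimate carries over without change.
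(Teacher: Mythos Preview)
Your overall strategy---splitting $g_{h,k}$ into the $A$-piece and the $B$-piece, handling the $A$-piece via a mean-value/FTC argument, and handling the $B$-piece via the Laurent expansion $F(z)=z^{-2}-\tfrac{1}{12}+\phi(z)$ with a cut at $|k\ell w|=\pi$---is exactly the route the paper takes (Lemmas~\ref{lemma:A_bound} and~\ref{lemma:B_bound}). The $B$-piece analysis you outline matches the paper's almost verbatim.

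There is, however, a genuine slip in your treatment of the $A$-piece for the tail $\ell>\ell_0$. You propose to ``bound $|F|$ directly'' there, i.e.\ to estimate $|F(\ell w-2\pi i\ell h/k)-F(-2\pi i\ell h/k)|$ by the triangle inequality. That loses the prefactor $|w|$: the term $|F(-2\pi i\ell h/k)|=\tfrac14\csc^2(\pi\ell h/k)$ is of order $k^2$ with no $|w|$ in sight, so after summing you obtain roughly $k^2\,|x|^{\pi/(k|\Im w|)}/(1-|x|)$, not the stated $\dfrac{2|w|}{1-|x|}\cdot\dfrac{|x|^{\pi/(k|\Im w|)}}{1-e^{-\pi\Re w/(k|\Im w|)}}$. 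Equally, keeping FTC on that range fails because the segment from $-2\pi i\ell h/k$ to $\ell w-2\pi i\ell h/k$ may pass arbitrarily close to a pole of $F$, so you have no uniform control on $|F'|$ there.

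The paper sidesteps this by applying the mean-value theorem not term-by-term but to the \emph{entire} function $t\mapsto A_{h,k}(x,wt)$ on $[0,1]$. Differentiating in $t$ produces the global factor $|w|$ \emph{before} any splitting, and then one splits the resulting sum over $\ell$ at the $t$-dependent threshold $\ell|\Im w|t<\pi/k$; in the complementary range one can use the crude bound $|1-e^{-\ell wt+2\pi i\ell h/k}|^{-1}\le(1-e^{-\ell t\Re w})^{-1}$ and the condition $\ell t\ge \pi/(k|\Im w|)$ to get the denominator $(1-e^{-\pi\Re w/(k|\Im w|)})$. This global-MVT device is the one nontrivial idea your sketch is missing; everything else you wrote is correct and aligned with the paper.
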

The proof is given in the following two lemmas.

\begin{lemma}\label{lemma:A_bound}
Let $k \geq 2$ and $\Re w>0$.
(a)
 If  $\Im w \neq 0$, then
\[
| A_{h,k}(x,w) - A_{h,k}(x,0) | 
\leq \frac{  2 |w|}{ 1- |x|     } 
\left[
k^3 + \frac{  |x|^{ \frac{ \pi}{ k | \Im w|}      }    }{  1- e^{ - \frac{ \pi \Re w}{k | \Im w|}     } }
\right].
\]
(b)
When $\Im w=0$, 
$\displaystyle
| A_{h,k}(x,w) - A_{h,k}(x,0) | 
\leq \frac{  2 |w|}{ 1- |x|     } k^3 .
$
\end{lemma}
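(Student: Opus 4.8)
The plan is to bound the difference $A_{h,k}(x,w) - A_{h,k}(x,0)$ term by term in the defining series. Writing out the two series from \eqref{eq:defAB} and the formula for $A_{h,k}(x,0)$, the summand over $k \nmid \ell$ is
\[
\frac{x^\ell}{\ell}\left[ \frac{e^{-\ell w + 2\pi i \ell h/k}}{(1 - e^{-\ell w + 2\pi i \ell h/k})^2} + \frac14 \csc^2(\pi \ell h/k)\right].
\]
The bracketed quantity is exactly $f(\ell w)$, where $f(z) = \frac{e^{-z+i\theta}}{(1-e^{-z+i\theta})^2} - \frac{e^{i\theta}}{(1-e^{i\theta})^2}$ with $\theta = 2\pi \ell h/k$ (using $\frac{e^{i\theta}}{(1-e^{i\theta})^2} = -\frac14 \csc^2(\theta/2)$). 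The idea is then to get a bound of the form $|f(\ell w)| \le C \cdot \ell |w| \cdot (\text{something in } \theta)$ valid for $\Re(\ell w) > 0$, by recognizing $f$ as the increment of $g(z) = e^{-z}/(1-e^{-z})^2$ and estimating $g'$ along the segment from $i\theta$ to $-\ell w + i\theta$. Since $\Re w > 0$, this segment stays in the half-plane $\Re z \le 0$ shifted appropriately, and one controls $|1 - e^{-z+i\theta}|$ from below in terms of the distance of $\theta$ (mod $2\pi$) to $0$, i.e. in terms of $|\sin(\pi \ell h/k)|$.

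The key number-theoretic input is that for $k \nmid \ell$ and $(h,k)=1$, the residue $\ell h \bmod k$ is a nonzero residue, so $|\sin(\pi \ell h/k)| \ge \sin(\pi/k)$, and hence $|\sin(\pi \ell h/k)|^{-1}$ is bounded by something like $k$ (up to constants; more precisely $\csc(\pi/k) \le k/2$ or so for $k \ge 2$). Feeding this into the per-term bound produces roughly $|f(\ell w)| \lesssim \ell|w|\, k^2$ uniformly in $\ell$ coprime-modular conditions. Then summing $\sum_{k\nmid \ell} \frac{|x|^\ell}{\ell} \cdot \ell |w| k^2 = |w| k^2 \sum |x|^\ell \le \frac{|w| k^2}{1-|x|}$, and absorbing an extra factor of $k$ into the constant, yields the clean bound $\frac{2|w|}{1-|x|} k^3$. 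This already handles part (b) — when $w$ is real and positive, the half-plane geometry is cleanest and no further correction term is needed.

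For part (a), when $\Im w \ne 0$, the subtlety is that the naive per-term estimate of $|f(\ell w)|$ degrades once $\ell |\Im w|$ grows large — specifically, once $\ell$ is so large that $\ell \Im w$ wraps around past multiples of $2\pi$, the denominator $|1 - e^{-\ell w + i\theta}|$ is no longer controlled by $|\sin(\pi \ell h/k)|$ alone and may be small for reasons unrelated to $h,k$. The plan is to split the sum at a threshold, roughly $\ell \le N$ versus $\ell > N$ with $N \asymp \pi/(k|\Im w|)$ (so that on the first range $\ell|\Im w| \lesssim \pi/k$, keeping us away from the bad wrapping). On the first range the argument above gives the $k^3$ contribution. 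On the tail $\ell > N$, instead of the increment estimate one bounds $\left|\frac{e^{-\ell w + i\theta}}{(1 - e^{-\ell w+i\theta})^2}\right|$ and $\frac14|\csc^2(\pi\ell h/k)|$ separately and sums the geometric-type series $\sum_{\ell > N} \frac{|x|^\ell}{\ell}$, which contributes $|x|^N / (1 - e^{-\Re w \cdot (\text{stuff})})$ — this is the source of the term $\dfrac{|x|^{\pi/(k|\Im w|)}}{1 - e^{-\pi \Re w/(k|\Im w|)}}$ in the stated inequality.

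The main obstacle I expect is making the threshold split clean enough that both pieces land inside the advertised constants, and in particular getting the factor-of-$k$ bookkeeping right: one must be careful that the $\csc^2(\pi \ell h/k) \le \csc^2(\pi/k)$ bound really does cost only a power of $k$ (it costs $k^2$, which after the per-term $\ell|w|$ and summation becomes $k^3$), and that the tail geometric sum telescopes to exactly the exponential denominator shown. A secondary technical point is justifying the increment/mean-value estimate for the complex function $g$ along the segment — one should verify that the segment from $i\theta$ to $-\ell w + i\theta$ does not pass near a pole of $g$ (poles are at $2\pi i \mathbb{Z}$), which again reduces to the same statement that $\theta \bmod 2\pi$ is bounded away from $0$ by $\ge 2\pi/k$ together with $\Re w > 0$ pushing us off the imaginary axis immediately. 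Once Lemma~\ref{lemma:A_bound} is in hand, it combines with the analogous (and easier, since no $h$-dependence) estimate for the $B_{h,k} - \Psi_{h,k} - \tfrac{1}{12k}\ln(1-x^k)$ bracket to give Proposition~\ref{prop:g_bound}; that second estimate is presumably the content of the next lemma and involves expanding $B_{h,k}$, recognizing $\Psi_{h,k}$ and $\tfrac{1}{12k}\ln(1-x^k)$ as the first two terms of its small-$w$ expansion, and bounding the remainder with the Euler--Maclaurin / integral-comparison tail that produces the $|w|^2 k$ and $|x|^{\pi/|w|}$ terms.
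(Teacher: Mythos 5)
Your proposal follows essentially the same route as the paper's proof: the paper applies the Mean Value Theorem to $t\mapsto A_{h,k}(x,wt)$, which is exactly your per-term increment estimate of $g(z)=e^{-z}/(1-e^{-z})^2$ along the segment from $i\theta$ to $-\ell w+i\theta$; it then bounds the denominator from below via $|\sin y|\le |e^{x+iy}-1|$ combined with $k\nmid\ell$, $(h,k)=1$ (so $\csc(\pi/k)\le k/2$), and splits the $\ell$-sum at the threshold $\ell\approx\pi/(k|\Im w|t)$, the tail being a geometric series that produces exactly the term $|x|^{\pi/(k|\Im w|)}\big/\big(1-e^{-\pi\Re w/(k|\Im w|)}\big)$ --- precisely your plan. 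One piece of bookkeeping in your sketch is off, however: the increment estimate uses $g'(z)=-e^{-z}(1+e^{-z})/(1-e^{-z})^{3}$, so the per-term cost is $2\ell|w|\csc^{3}(\pi/k)\le 2\ell|w|(k/2)^{3}$; the power $k^{3}$ comes directly from the cubed denominator of $g'$, not from a $\csc^{2}$ bound. Your claimed per-term bound of order $\ell|w|k^{2}$ is not attainable (near $\theta=2\pi/k$ the derivative genuinely has size of order $k^{3}$), the summation over $\ell$ against $|x|^{\ell}/\ell$ contributes no additional power of $k$, and no factor of $k$ can be ``absorbed into the constant,'' since the bound must be uniform in $k$ with the explicit $k^{3}$. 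Fortunately the stated inequality allows $k^{3}$, so correcting this exponent closes the argument and lands exactly on the paper's computation. A minor difference of route: the paper obtains part (b) by letting $\Im w\to 0$ in part (a) (both correction quantities $|x|^{\pi/(k|\Im w|)}$ and $e^{-\pi\Re w/(k|\Im w|)}$ tend to $0$), whereas you prove (b) directly for real $w>0$; both are legitimate, and your direct argument is if anything slightly cleaner since no wrapping threshold is needed.
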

\begin{proof}
(a)
For fixed $w$ and $x$, consider the function of $t$, $A_{h,k}(x,w t)$. 
By the Mean Value Theorem, we find
\begin{align*}
| A_{h,k}(x,w) - A_{h,k}(x,0) | 
&\leq
\sup_{0 <  t  <1}\left| \frac{d}{dt}  A_{h,k}(x,wt) \right|
\\
&\leq
|w| \,
\left|  \,
\sum_{k \nmid \ell} x^\ell
 \frac{    e^{ - \ell w t + 2 \pi i \ell h/k}      (1+e^{ - \ell w  t + 2 \pi i \ell h/k}  )         }  
 {     (1- e^{ - \ell w t + 2 \pi i \ell h/k}  )^3      } \,
 \right|
 \\
 &\leq
 2 \, |w| \, \sum_{ k \nmid \ell} \frac{ |x|^\ell }{  |  e^{ - w t \ell + 2 \pi i \ell h/k} -1|^3}  .
\end{align*}
For $0<t<1$ and $\ell \nmid k$ such that $ \ell | \Im w | t < \pi/k$, we have the bound
\[
\frac{ 1 }{  |  e^{ - w t \ell + 2 \pi i \ell h/k} -1|} 
\leq
\frac{1}{ |  \sin( 2 \pi \ell h/k - \Im w t \ell) |} \leq  | \csc( \pi/k)| \leq k/2
\]
since $ | \sin(y)| \leq | e^{x+iy} -1|$.
On the other hand, if $ \ell \geq \frac{ \pi }{  k | \Im w| t}$, we see that
\[
\frac{ 1 }{  |  e^{ - w t \ell  t+ 2 \pi i \ell h/k} -1|}  
\leq
 \frac{1}{  1- e^{ - \Re w t \ell}}    \leq \frac{1}{ 1 - e^{ - \frac{ \pi \Re w}{k | \Im w|} }       } .
\]
Combining these last two bounds, we can complete the proof:
\begin{align*}
\left|  \frac{d}{dt} A_{h,k}(x,wt) \right|
&\leq
2 \sum_{ k \nmid \ell} \frac{ |x|^\ell }{  |  e^{ - w t \ell + 2 \pi i \ell h/k} -1|^3  } 
\\
&\leq
\sum_{  { k \nmid \ell,} \atop{ \ell < \pi/[k | \Im w|} t] } 2 k^3 |x|^\ell
+
 \frac{ 2 }{  (1-  e^{ -  \frac{ \pi \Re w}{ k | \Im w|}  })^3      } 
 \sum_{  { k \nmid \ell,} \atop{ \ell \geq \pi/[k | \Im w|  t ] }  }   |x|^\ell
 \\
 &\leq
 2k^3 \, \frac{1}{1- |x|} +  \frac{ 2 }{  (1-  e^{ -  \frac{ \pi \Re w}{ k | \Im w|}  })^3      } \, 
 \frac{  |x|^{ \frac{ \pi}{ k   |\Im w| t}} }{  1- |x| }
 \\
 &\leq
 \frac{2}{ 1- |x|} \left[
 k^3 + \frac{  |x|^{ \frac{ \pi}{ k | \Im w|}      }    }{  1- e^{ - \frac{ \pi \Re w}{k | \Im w|}     } }
 \right]  .
\end{align*}
(b)
Both $|x|^{ \frac{ \pi}{ k | \Im w|}}$ and $e^{ - \pi \frac{ \Re w}{ k | \Im w|}}$ go to 0 as $\Im w \to 0$, so 
the bound in part (b) follows from part (a).
\end{proof}

\begin{lemma}\label{lemma:B_bound}
  Assume that $0<|x|<1$ and $\Re w>0$. There exists $M>0$ such that
\begin{equation}\label{eq:B_bound}
\left| B_k(x,w) - \frac{ Li_3(x^k)}{ k^3 w^2} - \frac{1}{12} \ln(1-x^k) \right|
\leq
\frac{1}{ (1- |x|)^2} \left[
M |w|^2 k + 
\left(
\frac{2}{ ( 1- e^{ - \Re w \frac{ \pi}{ |w|}})^3} +1
\right)  |x|^{ \frac{ \pi}{ |w|}}
\right]  .
\end{equation}
\end{lemma}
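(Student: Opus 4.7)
The plan is to isolate the singular behavior of $B_k(x,w)$ as $w\to 0$ via a Laurent expansion of the summand kernel and then estimate the remainder. The relevant algebraic identity is
\[
\frac{e^{-z}}{(1-e^{-z})^{2}}=\frac{1}{4\sinh^{2}(z/2)}=\frac{1}{z^{2}}-\frac{1}{12}+R(z),
\]
where $R$ is an even analytic function near $0$ with $R(z)=z^{2}/240+O(z^{4})$. In particular there exists an absolute constant $M_{0}$ such that $|R(z)|\leq M_{0}|z|^{2}$ for $|z|\leq \pi$. Substituting $z=\ell k w$ in the series defining $B_k$, interchanging summation, and using the identities $\sum_{\ell\geq 1}x^{k\ell}/(k\ell(\ell k w)^{2})=Li_{3}(x^{k})/(k^{3}w^{2})$ and $\sum_{\ell\geq 1}x^{k\ell}/(k\ell)=-k^{-1}\ln(1-x^{k})$, I would reduce the lemma to a bound on the single remainder series
\[
B_{k}(x,w)-\frac{Li_{3}(x^{k})}{k^{3}w^{2}}-\frac{1}{12}\ln(1-x^{k})=\sum_{\ell=1}^{\infty}\frac{x^{k\ell}}{k\ell}R(\ell k w),
\]
up to a harmless adjustment of the constant coefficient of $\ln(1-x^{k})$.

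I would then split the series at the transition scale $L=\lceil\pi/(k|w|)\rceil$ past which the Taylor control on $R$ breaks down. For the low range $\ell<L$, the estimate $|R(\ell kw)|\leq M_{0}(\ell k|w|)^{2}$ gives
\[
\sum_{\ell<L}\frac{|x|^{k\ell}}{k\ell}|R(\ell kw)|\leq M_{0}k|w|^{2}\sum_{\ell\geq 1}\ell|x|^{k\ell}\leq\frac{M_{0}k|w|^{2}}{(1-|x|)^{2}},
\]
which accounts for the first summand of \eqref{eq:B_bound}. For the high range $\ell\geq L$ I would instead bound the three pieces of $R(\ell kw)$ individually. The monomial pieces $1/(\ell kw)^{2}$ and $-1/12$ are controlled directly using $|\ell kw|\geq\pi$ and contribute to the ``$+1$'' term of the bound. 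The exponential piece $e^{-\ell kw}/(1-e^{-\ell kw})^{2}$ is bounded via $|1-e^{-z}|\geq 1-e^{-\Re z}$ together with the key inequality $\ell k\Re w\geq\pi\Re w/|w|$ (valid for $\ell\geq L$), which extracts a uniform factor of $1-e^{-\pi\Re w/|w|}$ from each denominator. Geometric summation over $\ell\geq L$ introduces the tail factor $|x|^{kL}\leq|x|^{\pi/|w|}$, and the elementary bound $(1-|x|^{k})^{-1}\leq(1-|x|)^{-1}$ finally yields the overall prefactor $(1-|x|)^{-2}$.

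The main technical obstacle is the bookkeeping of the precise power of $(1-e^{-\pi\Re w/|w|})$ and the numerical constants, so that the high-range contribution assembles into exactly the form displayed in \eqref{eq:B_bound} with a cube rather than some other power in the denominator. A convenient way to arrange this is to expand $e^{-z}/(1-e^{-z})^{2}=\sum_{m\geq 1}me^{-mz}$, swap the $\ell$ and $m$ summations in the high-range part, evaluate the inner geometric series in $\ell$ explicitly, and then apply a second geometric summation in $m$; the two nested geometric series together with the factor of $m$ inside are what produce the cube. Aside from this careful tracking, the rest of the argument is a routine splitting estimate combining the Taylor control on $R$ in the low range with triangle-inequality bounds in the high range.
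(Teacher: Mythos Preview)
Your approach is essentially the paper's: write the left side as $\sum_{\ell\geq 1}\frac{x^{k\ell}}{k\ell}\bigl[\frac{e^{-\ell kw}}{(1-e^{-\ell kw})^{2}}-\frac{1}{(\ell kw)^{2}}+\frac{1}{12}\bigr]$, split the sum at $|\ell kw|=\pi$, use the Taylor bound $|R(z)|\leq M|z|^{2}$ in the low range, and bound the three pieces of $R$ separately in the high range. The one place you diverge is in flagging the cube $(1-e^{-\pi\Re w/|w|})^{-3}$ as ``the main technical obstacle'' and proposing a double geometric summation over $\sum_{m\geq 1}me^{-m\ell kw}$; the paper instead gets it in one line. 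For $|\ell kw|>\pi$ one has $k\ell\,\Re w>\pi\Re w/|w|$, so
\[
\left|\frac{e^{-\ell kw}}{(1-e^{-\ell kw})^{2}}\right|\leq\frac{e^{-\Re w\,k\ell}}{(1-e^{-\Re w\,k\ell})^{2}}\leq\frac{1}{(1-e^{-\pi\Re w/|w|})^{2}}\leq\frac{2}{(1-e^{-\pi\Re w/|w|})^{3}},
\]
the last step being a harmless weakening (since $1-e^{-a}\leq 1$) chosen only so that the exponent matches the cube appearing in the companion estimate for $A_{h,k}$. So your double-series device works but is unnecessary.
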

\begin{proof}
We begin by expanding the left-hand side of (\ref{eq:B_bound}) as a series
\[
\sum_{\ell=1}^\infty \frac{ x^{ k \ell}}{ k \ell}
\left[
\frac{ e^{ - wk \ell}}{ ( e^{ - wk \ell} -1)^2}  - \frac{1}{ \ell^2 k^2 w^2} + \frac{1}{12}
\right]  .
\]
For $| kw \ell | < \pi$, there exists $M>0$ such that
\[
\left|
\frac{ e^{ - wk \ell}}{ ( e^{ - wk \ell} -1)^2}  - \frac{1}{ \ell^2 k^2 w^2} + \frac{1}{12}
\right|
\leq M | k w \ell |^2 
\]
since 
\[
\lim_{z\to 0} \frac{1}{z^2} \left(  \frac{ e^z}{ (e^z-1)^2} - \frac{1}{z^2} + \frac{1}{12} \right) = \frac{1}{240} .
\]
On the other hand, for $| kw \ell | > \pi$,  
\[
\left|
\frac{ e^{ - wk \ell}}{ ( e^{ - wk \ell} -1)^2}
\right| \leq
\frac{ e^{ - \Re w k \ell}}{  ( e^{ - \Re w k \ell} -1)^2} \leq \frac{2}{ ( 1- e^{ - \Re w \frac{ \pi }{ |w|}})^3}  .
\]
So we have the bound for $\Re w>0$
\[
\left|
\frac{ e^{ - wk \ell}}{ ( e^{ - wk \ell} -1)^2}  - \frac{1}{ \ell^2 k^2 w^2} + \frac{1}{12}
\right|
\leq
\frac{2}{ (1- e^{ - \Re w \frac{ \pi}{ |w|}})^3} +1
\]
The proof is  now completed in the same way as Lemma \ref{lemma:A_bound}.
\end{proof}

\section{Phases}

For convenience, we record a result from our paper \cite{Boyer_Parry_phase}.

\begin{definition} \cite[Definition 1]{Boyer_Parry_phase}
\label{def:phase}
 Let $\{ L_k(x) \}$ be any sequence of complex-valued  functions on a domain $D$.  
The  set $R(m)$  is the  $m$-th phase (or phase $m$) of $\{ L_k(x)\}$
if
(1) if $x \in R(m)$, then $\Re L_m(x) > \Re L_k(x)$ for all $k \neq m$ and
(2)
 if $V$ is any open subset of $D$ satisfying (1), then $V \subset R(m)$.
\end{definition}

\begin{theorem}(Parry-Boyer \cite{Boyer_Parry_phase})\label{thm:phase}
Let $D$ be the punctured open unit disk and let $\{ L_k(x) \}$ be given as in (\ref{def:L_k}). 
  Then $D$ contains  exactly
two nonempty phases $R(1)$ and $R(2)$ whose union is dense in $D$ and
whose common boundary is  the level set
 $\{x\in D: \Re L_1(x)=\Re L_2(x)  \}.$  
 It has exactly one real  point $x^* \simeq  -0.82500 \, 30529 $ 
and its closure contains exactly  two points $e^{ \pm  i \theta^* }$ on the unit circle   where  
$\theta^* \simeq \pm 0.95170 \, 31251  \pi.$
Further $R(2)$ lies in the open left half plane and  $R(2) \cap {\mathbb R} = (-1,x^*)$.

\end{theorem}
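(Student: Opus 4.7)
The plan is to reduce the theorem to analyzing only $L_1$ and $L_2$ and then to study the cross-over set $\mathcal{L}=\{x\in D:\Re L_1(x)=\Re L_2(x)\}$.

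First I would establish that $\Re L_k(x)<\max\{\Re L_1(x),\Re L_2(x)\}$ for every $k\geq 3$ and every $x\in D$. The starting magnitude bound $|L_k(x)|\leq \tfrac{1}{k}(2\,\mathrm{Li}_3(|x|^k))^{1/3}$ shows $|L_k|\ll|L_1|,|L_2|$ for $k$ large but does not by itself force real parts to be strictly smaller. One would therefore split by the parity of $k$ and compare $L_k$ to $L_1$ (for $k$ odd) or to $L_2$ (for $k$ even), using the principal branch of the cube root throughout. The ratio $L_k/L_{\gcd(k,2)}$ is controlled both by a factor $\leq 2/k$ in magnitude and by the difference of cube-root arguments $\tfrac{1}{3}(\arg\mathrm{Li}_3(x^k)-\arg\mathrm{Li}_3(x^{\gcd(k,2)}))$ in phase; combining these gives the strict dominance and reduces $R(m)$ to being empty for $m\geq 3$, so $R(1)\cup R(2)=D\setminus\mathcal{L}$.

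Density of $R(1)\cup R(2)$ then follows because $h(x):=\Re L_1(x)-\Re L_2(x)$ is real-analytic on $D$ and is strictly positive for small real $x>0$, so $h\not\equiv 0$ and its zero set $\mathcal{L}$ has empty interior. To locate $x^*$, I parameterize the negative real axis by $x=-t$, $t\in(0,1)$. Since $\mathrm{Li}_3(-t)<0$ is real, the principal cube root gives $\Re L_1(-t)=\tfrac{1}{2}(-2\mathrm{Li}_3(-t))^{1/3}$, while $\Re L_2(-t)=\tfrac{1}{2}(2\mathrm{Li}_3(t^2))^{1/3}$. The cross-over equation collapses to $f(t):=-\mathrm{Li}_3(-t)-\mathrm{Li}_3(t^2)=0$. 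A direct Taylor expansion yields $f(t)=t-\tfrac{9}{8}t^2+O(t^3)$, so $f(t)>0$ for small $t>0$, while $f(1)=\tfrac{3}{4}\zeta(3)-\zeta(3)=-\tfrac{1}{4}\zeta(3)<0$. The intermediate value theorem gives a root $t^*\in(0,1)$, and uniqueness follows by showing $f'$ has a single zero on $(0,1)$ so that $f$ has a single interior maximum before descending. This pins down $x^*=-t^*$, and since $f<0$ exactly on $(t^*,1)$, also yields $R(2)\cap\mathbb R=(-1,x^*)$.

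For the boundary points $e^{\pm i\theta^*}$, extend $L_1$ and $L_2$ continuously to $\partial D\setminus\{1\}$ via the Clausen identifications $\mathrm{Li}_3(e^{i\theta})=\mathrm{Cl}_3(\theta)+i\,\mathrm{Sl}_3(\theta)$, and set $g(\theta)=\Re L_1(e^{i\theta})-\Re L_2(e^{i\theta})$. One has $g(0^+)>0$ and $g(\pi)<0$, so continuity yields $\theta^*\in(0,\pi)$ with $g(\theta^*)=0$; complex conjugation gives the mirror point $-\theta^*$. Confirming that $R(2)$ lies in the open left half plane then reduces to verifying $g(\theta)>0$ for $\theta\in(-\pi/2,\pi/2)$ by direct estimation of $\mathrm{Cl}_3,\mathrm{Sl}_3$ in that range. \emph{The main obstacle} is the very first step: ruling out every $k\geq 3$ from dominating \emph{somewhere} in $D$ requires the parity split together with careful simultaneous control of magnitude and cube-root argument across the whole disk, because a small ratio $|L_k|/|L_1|$ alone does not imply $\Re L_k<\Re L_1$ when the cube roots can rotate $L_k$ into a favorable half-plane.
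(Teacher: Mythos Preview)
The paper does not prove Theorem~\ref{thm:phase} at all: the section ``Phases'' opens with ``For convenience, we record a result from our paper~\cite{Boyer_Parry_phase},'' and the theorem is stated with the attribution ``(Parry-Boyer~\cite{Boyer_Parry_phase})'' and no proof. So there is nothing in this paper to compare your proposal against; the actual argument lives in the cited companion paper.

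That said, your sketch is a plausible outline of what such a proof must contain, and the concrete computations you give on the negative real axis are correct: for $x=-t$ with $t\in(0,1)$ one has $\mathrm{Li}_3(-t)<0$, so the principal cube root contributes a factor $\cos(\pi/3)=\tfrac12$ and the cross-over equation does reduce to $-\mathrm{Li}_3(-t)=\mathrm{Li}_3(t^2)$, with the Taylor expansion and the value $f(1)=-\tfrac14\zeta(3)$ as you wrote. Two places in your outline remain genuine gaps rather than routine details. First, the dominance step for $k\geq3$: you correctly flag that a magnitude bound $|L_k|\leq\frac{2}{k}|L_{\gcd(k,2)}|$ (roughly) does not by itself control real parts, but the parity split you propose still needs a uniform bound on the argument difference $\tfrac13\bigl(\arg\mathrm{Li}_3(x^k)-\arg\mathrm{Li}_3(x^{\gcd(k,2)})\bigr)$ valid over all of $D$, and you have not indicated how to obtain one. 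Second, uniqueness of $x^*$ via ``$f'$ has a single zero'' is asserted, not shown; since $f'(t)=-\frac{1}{t}\ln(1+t)\cdot\text{(dilog terms)}$ is not obviously monotone, this would need an explicit argument. The boundary analysis for $\theta^*$ and the half-plane claim are similarly schematic. None of this is wrong in spirit, but the proposal is a roadmap rather than a proof, and since the present paper defers entirely to~\cite{Boyer_Parry_phase}, you would need to consult that reference to see how these steps are actually carried out.
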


\section{Asymptotics of the Polynomials on the Phases $R(1)$ and $R(2)$ }\label{section:asymptotics}

\subsection{Introduction}
We will adapt the circle method which is usually used to give asymptotics for a sequence $\{c_n\}$ of positive numbers through their generating
function $\sum_{n\geq 0} c_n u^n$.   With the  coefficients $c_n$  replaced with the polynomials $Q_n(x)$, we find that the dominant contribution
to their asymptotics depends on the location of $x$ in the unit disk ${\mathbb D}$.  The purpose of this section is to show
that the  subsets of $\mathbb D$ where the asymptotics have the same
dominant form coincide with the phases of $\{ L_k(x)\}$.
See Andrews's classic book \cite[Chapter 5]{Andrews_book}  for a thorough discussion of the circle method.

   \begin{theorem} \label{thm:Main1}
   Let $R(1)$ and $R(2)$ be the phases of $\{ L_k(x) \}$ given in Theorem \ref{thm:phase}.
   \newline
 (a)
Let  $x\in X\subset R(1) \setminus [x^*,0]$ be a compact set,  then
\[
Q_n(x)= 
\sqrt[12]{1-x}  \sqrt{\frac{L_1(x)} { 6\pi n^{4/3} }}
\exp \left(  \tfrac{3} {2} n^{2/3} L_1(x) \right)
\,
\left( 1+O_X\left(  n^{-1/3} \right)  \right) .
\]
\noindent
(b)
Let  $x\in X\subset R(2)$ be a compact, then 
\begin{eqnarray*}
Q_n(x)
=
(-1)^n\sqrt[24]{1-x^2}
\,
\sqrt[8]{\frac{1-x}{1+x}} 
\,
 \sqrt{\frac{L_2(x)} { 6\pi n^{4/3} }  }
 \,
 \exp \left(  \tfrac{3} {2} n^{2/3} L_2(x) \right)
\left(1+O_X\left(  n^{-1/3} \right)  \right).
\end{eqnarray*}
\end{theorem}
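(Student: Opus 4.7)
The plan is to apply the circle method to the Cauchy integral
\[
Q_n(x) = \frac{1}{2\pi i} \oint_{|u|=\rho_n} P(x,u)\, u^{-n-1}\, du,
\]
with $\rho_n = e^{-\alpha_n}$ and $\alpha_n = \Re L_m(x)\, n^{-1/3}$ chosen so that the circle threads the saddle of the dominant Farey fraction on the phase $R(m)$. I would write $u = e^{-\alpha_n + i\phi}$, dissect $\phi \in [0, 2\pi)$ into Farey arcs around each $h/k$ of some order $N = N(n)$, and on the arc at $h/k$ substitute $w = \alpha_n - i(\phi - 2\pi h/k)$ to apply Proposition \ref{prop:factorization}. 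After the factor $e^{2\pi i n h/k}$ there cancels the matching phase in $u^{-n}$, the local integrand reduces to
\[
\omega_{h,k,n}(x)\, \exp\bigl(\, nw + \Psi_{h,k}(x,w) + g_{h,k}(x,w)\,\bigr),
\]
with any residual $n$-dependent phase absorbed into $\omega_{h,k,n}(x)$.

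The dominant exponent $f_k(w) = nw + \Psi_{h,k}(x,w)$ has saddle at $w_k = L_k(x)\, n^{-1/3}$, and the identity $k^3 L_k(x)^3 = 2\, Li_3(x^k)$ yields $f_k(w_k) = \tfrac{3}{2}\, n^{2/3} L_k(x)$ together with $f_k''(w_k) = 3 n^{4/3}/L_k(x)$. For the unique Farey fraction with $k = m$ on each phase (namely $h/k = 0/1$ for $m=1$ and $h/k = 1/2$ for $m=2$), I would carry out Laplace's method along steepest descent. Proposition \ref{prop:g_bound} provides the uniform estimate $g_{h,m}(x,w) = O(n^{-1/3})$ on a short vertical segment through $w_m$, justifying the replacement $e^{g_{h,m}} = 1 + O(n^{-1/3})$ and the extension of the segment to a full Gaussian integral. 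This produces $\sqrt{2\pi / f_m''(w_m)} = \sqrt{2\pi L_m(x)/(3 n^{4/3})}$, which combined with the $1/(2\pi)$ normalization of Cauchy's formula is exactly the advertised prefactor $\sqrt{L_m(x)/(6\pi n^{4/3})}$.

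The phase condition of Theorem \ref{thm:phase} is what kills every Farey fraction with $k \neq m$: its saddle contribution is of order $\exp(\tfrac{3}{2} n^{2/3} \Re L_k(x))$, which by compactness of $X \subset R(m)$ is exponentially smaller than the main term, uniformly in $x \in X$. The minor arcs (pieces of the contour far from every Farey fraction of order at most $N$) are controlled by Proposition \ref{prop:g_bound}(a); its competing terms $|x|^{\pi/(k|\Im w|)}$ and $(1 - e^{-\pi \Re w/|w|})^{-3}$ dictate the calibration of $N$ against the saddle scale $\alpha_n \sim n^{-1/3}$, so that the minor-arc contribution is also exponentially smaller than the main term.

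Finally, the remaining prefactor is read off from $\omega_{h,m,n}(x)$. For $m=1$ it is simply $(1-x)^{1/12}$, yielding $\sqrt[12]{1-x}$. For $m=2$, the series $A_{1,2}(x,0) = -\tfrac{1}{4}\sum_{\ell \text{ odd}} x^\ell/\ell$ sums in closed form (using $\csc^2(\pi\ell/2)=1$ for odd $\ell$) to an explicit logarithm of $(1+x)/(1-x)$, and $e^{-A_{1,2}(x,0)}$ combined with $(1-x^2)^{1/24}$ from $(1-x^k)^{1/(12k)}$ and the phase $(-1)^n$ arising from $h/k = 1/2$ reassembles into the claimed factor $(-1)^n\sqrt[24]{1-x^2}\sqrt[8]{(1-x)/(1+x)}$. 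I expect the main technical obstacle to be the simultaneous calibration of the Farey order $N$, the major-arc length, and the saddle scale $\alpha_n$ so that Proposition \ref{prop:g_bound} yields $o(1)$ control uniformly in $x$ on both major and minor arcs; once those scales are in harmony, the saddle analysis and the phase comparison themselves proceed cleanly.
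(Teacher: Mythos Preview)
Your proposal is correct and follows essentially the same route as the paper: Cauchy integral on the circle of radius $e^{-\Re L_m(x)\,n^{-1/3}}$, Farey dissection of order $N\asymp \delta n^{1/3}$, Proposition~\ref{prop:factorization} on each arc, saddle-point evaluation on the single arc with $k=m$, and Proposition~\ref{prop:g_bound} (via the paper's Lemma~\ref{lemma:g_estimate}) to control $g_{h,k}$ on both major and minor arcs. Your identification of the calibration of $\delta$ as the main technical point is exactly what the paper resolves in Lemma~\ref{lemma:delta0}.

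One place where your sketch is slightly loose and worth tightening: for the arcs with $k\neq m$ you say ``its saddle contribution is of order $\exp(\tfrac32 n^{2/3}\Re L_k(x))$''. But the contour radius is fixed by $\Re L_m(x)$, not $\Re L_k(x)$, so you are not at the saddle of $f_k$. What you actually need (and what the paper does) is to bound the real part of $f_k$ \emph{along the given contour}: Lemma~\ref{lemma:basic_inequality} gives
\[
\Re f_k \;\le\; \frac{(\Re L_k(x))^3}{2(\Re L_m(x))^2} + \Re L_m(x) \;<\; \tfrac32\,\Re L_m(x),
\]
and one then needs this gap bounded below uniformly over all $k\neq m$, which uses $\Re L_k\to 0$ as $k\to\infty$. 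Also note that in this Farey dissection there are no ``minor arcs far from every Farey fraction'': every point lies on some arc, and ``minor'' here simply means $k\neq m$.
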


By the Cauchy integral formula, we have an integral expression for $Q_n(x)$
with integration contour the circle $| u|= e^{ - 2\pi \alpha}$ where 
\begin{equation}\label{eq:alpha}
\alpha = \frac{1}{ 2 \pi n^{1/3}} \Re L_m(x)
\end{equation}
and $m =1$ or $2$.
Then we dissect the circular contour relative to a Farey sequence $F_N$ of order $N = \lfloor \delta n^{1/3} \rfloor$ as
follows 
\begin{align*}
Q_n(x)
&=
\frac{1}{2 \pi i } \, \oint_{  |u| = e^{ - 2 \pi \alpha}} \frac{ P(x,u) \, du}{ u^{n+1}}
\\
&=
\sum_{ h/k \in F_N} \, \int_{ h/k - 1/k(k+k')}^{ h/k + 1/ k( k+ k'')} 
\, \frac{P( x, e^{ 2 \pi ( - \alpha + i \psi)}  ) } { e^{ 2\pi n ( - \alpha + i \psi)}} \, 
d \psi
\\
&=
\sum_{ h/k \in F_N} \, \int_{ h/k - 1/k(k+k')}^{ h/k + 1/ k( k+ k'')} 
e^{ - 2 \pi i n h/k} e^{ 2 \pi n ( \alpha - i ( \psi -h/k))} 
P(s, e^{ - 2 \pi ( \alpha -i ( \psi - h/k)) + 2 \pi i h/k)})  \, d\psi
\end{align*}
where $h'/k' < h/k < h''/k''$ are consecutive elements from the Farey sequence $F_N$ in reduced form.
By convention, we will always
assume that $N > m$.

We now make the change of variables $v = \psi - h/k$ and apply the factorization in Proposition \ref{prop:factorization} to get
\[
P(x, e^{ - 2 \pi ( \alpha -i v) + 2 \pi i h/k)})
=
e^{ 2 \pi i n h/k} \omega_{h,k,n}(x)
\exp
\left(
\frac{ L_k(x)^3}{ 8 \pi^2 ( \alpha- iv)^2}  + g_{h,k}(x, 2 \pi ( \alpha - iv))  .
\right)
\]
We introduce the integral
\begin{equation}
I_{h,k,n}(x)
=
 \int_{ h/k - 1/k(k+k')}^{ h/k + 1/ k( k+ k'')} 
 \exp
\left(
\frac{ L_k(x)^3}{ 8 \pi^2 ( \alpha- iv)^2}  + 2 \pi n ( \alpha - iv)
\right) 
\
e^{ g_{h,k}(x, 2 \pi ( \alpha - iv))}
\, dv .
\end{equation}
So we may write $Q_n(x)$ as
\begin{equation}\label{eq:Q_n(x)_sum}
Q_n(x) = \sum_{ h/k\in F_N} \omega_{h,k,n}(x) I_{h,k,n}(x)  .
\end{equation}

Our next goal is to  show that $\omega_{1,m,n}(x) I_{1,m,n}(x)$ is the dominant term in this expansion.

\subsection{Major arcs}\label{subsection:major_arcs}

We start by making another change of variables $z= 2 \pi n^{1/3} v$ in $I_{h,k,n}(x)$ to get
\begin{align*}
I_{h,k,n}(x) 
= 
\frac{1}{2 \pi n^{1/3}} \,
\int_{    - \frac{2 \pi n^{1/3}   }{ m+m' }  }^{ \frac{2 \pi n^{1/3}}{ m+m''}  }
\,
&
\exp
\left[
n^{2/3} 
\left(
\frac{ L_m(x)^3}{ 2 ( \Re L_m(x) - iz)^2 } + ( \Re L_m(x) - iz)
\right)
\right] \, 
\\
& \qquad
\times
\exp
\left[
g_{h,k} \left( x,  \frac{1}{ n^{1/3}} ( \Re L_m(x) - iz)  \right)
\right]
\,dz  .
\end{align*}

We decompose $I_{h,k,n}(x)$ into the sum $I'_{h,k,n}(x) + I''_{h,k,n}(x)$ where
\[
I'_{h,k,n}(x) = 
\frac{1}{2 \pi n^{1/3}} \,
\int_{    - \frac{2 \pi n^{1/3}   }{ m+m' }  }^{ \frac{2 \pi n^{1/3}}{ m+m''}  }
\,
\exp
\left[
n^{2/3} 
\left(
\frac{ L_m(x)^3}{ 2 ( \Re L_m(x) - iz)^2 } + ( \Re L_m(x) - iz)
\right)
\right] \, 
dz  .
\]
and
\begin{align*}
I''_{h,k,n}(x) 
=
\frac{1}{2 \pi n^{1/3}} \,
\int_{    - \frac{2 \pi n^{1/3}   }{ m+m' }  }^{ \frac{2 \pi n^{1/3}}{ m+m''}  }
&\exp
\left[
n^{2/3} 
\left(
\frac{ L_m(x)^3}{ 2 ( \Re L_m(x) - iz)^2 } + ( \Re L_m(x) - iz)
\right)
\right] \, 
\\ 
\qquad \qquad \qquad
&\times \quad
\left\{
\exp
\left[
g_{h,k} \left( x,  \frac{1}{ n^{1/3}} ( \Re L_m(x) - iz)  \right)
\right]
-1
\right\}
\,dz   .
\end{align*}

We will obtain an asymptotic expansion of $I'_{h,m,n}(x) $ using a saddle point expansion. To begin we need a basic
inequality.

\begin{lemma}\label{lemma:basic_inequality}
Let  $\alpha >0$, $v \in {\mathbb R}$, and $| \arg(L) |\leq \pi/3$, then
\[
\Re
\left(
\frac{ L^3}{ ( \alpha - iv)^2} \right)
\leq \frac{ ( \Re L)^3}{ \alpha^2} .
\]
If $|\arg(L)| < \pi/3$ and $L \neq 0$, then equality is attained uniquely at $v= - \Im L$.
If $| \arg(L) | = \pi/3$ and $L \neq 0$, then equality is attained only at $v = \pm \Im L$.
\end{lemma}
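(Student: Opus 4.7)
The plan is to pass to polar coordinates and reduce the claim to a one-variable trigonometric inequality, which can then be analyzed by elementary calculus. Write $L = re^{i\theta}$ with $r = |L| \geq 0$ and $|\theta|\leq \pi/3$, and write $\alpha - iv = \rho e^{-i\phi}$ with $\rho = \sqrt{\alpha^2+v^2}>0$ and $\phi \in (-\pi/2,\pi/2)$ determined by $\tan\phi = v/\alpha$. Then $\Re L = r\cos\theta$, $\alpha = \rho\cos\phi$, and a direct computation gives
\[
\Re\!\left(\frac{L^3}{(\alpha - iv)^2}\right) = \frac{r^3}{\rho^2}\cos(3\theta + 2\phi), \qquad \frac{(\Re L)^3}{\alpha^2} = \frac{r^3\cos^3\theta}{\rho^2\cos^2\phi}.
\]
The case $L=0$ is trivial; otherwise, canceling the positive factor $r^3/\rho^2$ reduces the inequality to
\[
f(\phi) := \cos^2\phi \cos(3\theta + 2\phi) \;\leq\; \cos^3\theta \qquad (\phi \in (-\pi/2,\pi/2)).
\]

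Next I differentiate. Using the addition formula for sine,
\[
f'(\phi) = -2\cos\phi\bigl[\sin\phi\cos(3\theta+2\phi) + \cos\phi\sin(3\theta+2\phi)\bigr] = -2\cos\phi\,\sin\!\bigl(3(\phi+\theta)\bigr),
\]
so the interior critical points in $(-\pi/2,\pi/2)$ are exactly $\phi_k = -\theta + k\pi/3$ for those $k\in\mathbb{Z}$ with $|\phi_k|<\pi/2$. The behavior at the endpoints is controlled by $\cos^2\phi \to 0$, giving $f(\phi)\to 0$, so the extrema occur only at the $\phi_k$.

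I then tabulate $f(\phi_k)$. Using $\cos(\theta + 2\pi/3) = -\cos(\pi/3 - \theta)$ and $\cos(\theta - 2\pi/3) = -\cos(\pi/3 + \theta)$, the admissible critical values collapse to cubes of cosines:
\[
\cos^3\theta,\qquad -\cos^3\!\left(\tfrac{\pi}{3} - \theta\right),\qquad -\cos^3\!\left(\tfrac{\pi}{3} + \theta\right).
\]
For $|\theta|\leq \pi/3$ we have $\cos\theta \in [\tfrac12,1]$, so $\cos^3\theta\geq \tfrac18$, while $\cos(\pi/3\pm\theta)\in[-\tfrac12,1]$ gives $-\cos^3(\pi/3\pm\theta)\leq \tfrac18$. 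Therefore $\cos^3\theta$ is the largest critical value, attained at $\phi_0 = -\theta$, and this proves the inequality.

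The equality analysis falls out of the same bookkeeping: the two alternative critical values equal $\cos^3\theta$ only when both sides equal $1/8$, which forces $|\theta|=\pi/3$, and exactly one additional critical point lies in $(-\pi/2,\pi/2)$ in that case. Translating $\phi = -\theta$ back through $\tan\phi = v/\alpha$ with the application's convention $\alpha = \Re L$ yields $v = -\Im L$ in the strict case $|\arg L|<\pi/3$ and $v = \pm\Im L$ in the boundary case $|\arg L|=\pi/3$. The only real obstacle is Step 3: the simplification of the three critical values to cubes of cosines is not transparent without first noticing the identity $\cos(\theta + 2\pi/3) = -\cos(\pi/3 - \theta)$, after which the bound $\geq -1/8$ is immediate from the assumption $|\theta|\leq\pi/3$.
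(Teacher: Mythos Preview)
The paper states this lemma without proof, so there is no argument of the authors to compare against. Your polar-coordinate reduction to the one-variable inequality $\cos^2\phi\,\cos(3\theta+2\phi)\le\cos^3\theta$ and its analysis via the factorization $f'(\phi)=-2\cos\phi\,\sin\bigl(3(\phi+\theta)\bigr)$ is correct and efficient, and the tabulation of the critical values as $\cos^3\theta$, $-\cos^3(\pi/3-\theta)$, $-\cos^3(\pi/3+\theta)$ together with the bound $\cos^3\theta\ge 1/8\ge -\cos^3(\pi/3\pm\theta)$ settles the inequality cleanly.

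Two minor remarks on the equality clause. First, you rightly note that the conclusion ``equality at $v=-\Im L$'' as literally stated matches your computation only under the normalization $\alpha=\Re L$; in general the maximizing point is $v=-\alpha\tan(\arg L)=-\alpha\,\Im L/\Re L$. Since the paper only applies the lemma with $\alpha=\Re L_m(x)$ this is harmless, but you have in fact proved the sharper general statement. Second, your sentence ``exactly one additional critical point lies in $(-\pi/2,\pi/2)$'' is a slight slip of phrasing: when $\theta=\pi/3$, say, there are \emph{two} further interior critical points, $\phi_1=0$ and $\phi_2=\pi/3$, but only $\phi_2$ attains the maximum value $1/8$ (since $f(\phi_1)=-\cos^3 0=-1$). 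The intended meaning---that exactly one additional maximizer appears, giving $\phi=\pm\theta$ and hence $v=\pm\Im L$ in the boundary case---is correct, and the argument goes through unchanged.
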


\begin{lemma}
Let $x\in X\subset D$ be compact.
For $0 < \delta < \pi/ \sqrt[3]{ 2 \zeta(3)}$, then 
\newline
(a) for every $x\in X$, 
$\pm \Im L_m(x) \in ( - \pi/ m \delta, \pi/ m \delta) 
\subset [  - \frac{2 \pi n^{1/3}   }{ m+m' } ,\frac{2 \pi n^{1/3}}{ m+m''} ]$.
\newline
(b)
there exists a constant $K>0$ such that
\begin{align*}
I'_{h,m,n}(x)
&=
\frac{1}{2 \pi n^{1/3}} \,
\int_{    - \frac{ \pi}{ m \delta}  }^{ \frac{ \pi}{ m \delta}  }
\,
\exp
\left[
n^{2/3} 
\left(
\frac{ L_m(x)^3}{ 2 ( \Re L_m(x) - iz)^2 - iz)^2} + ( \Re L_m(x) - iz)
\right)
\right] \, dz
\\
&
+\quad
O
\left(
\exp\left(
n^{2/3} [  \tfrac{3}{2} \Re L_m(x) - K  ]
\right)
\right)  .
\end{align*}
\end{lemma}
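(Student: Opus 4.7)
The plan is to prove the two parts separately. Part (a) is an elementary check combining two uniform bounds: an upper bound on the Farey denominators $m+m'$ and $m+m''$, together with an upper bound on $|L_m(x)|$. Part (b) shows that truncating $I'_{h,m,n}(x)$ down to the smaller interval $[-\pi/(m\delta), \pi/(m\delta)]$ only discards points that are uniformly bounded away from the saddle point $z=-\Im L_m(x)$, so the discarded contribution is exponentially dominated by the peak $\exp(\tfrac{3}{2}n^{2/3}\Re L_m(x))$ of the main term.

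For part (a), I will use the standard Farey property that the neighbors $h'/m',h''/m''$ of $h/m$ in $F_N$ satisfy $m',m''\le N\le \delta n^{1/3}$, hence $m+m', m+m''\le\delta n^{1/3}+m$. Therefore each of $\frac{2\pi n^{1/3}}{m+m'}$ and $\frac{2\pi n^{1/3}}{m+m''}$ is at least $\frac{2\pi n^{1/3}}{\delta n^{1/3}+m}\to\frac{2\pi}{\delta}>\frac{\pi}{m\delta}$, so for $n$ sufficiently large (implicit in the statement) the desired inclusion of intervals holds. For $|\Im L_m(x)|$ I invoke the crude bound $|L_m(x)|\le\frac{1}{m}\sqrt[3]{2|Li_3(x^m)|}\le\frac{1}{m}\sqrt[3]{2\zeta(3)}$, valid on the entire closed disk, and combine it with the hypothesis $\delta<\pi/\sqrt[3]{2\zeta(3)}$ to conclude $|\Im L_m(x)|\le|L_m(x)|<\pi/(m\delta)$.

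For part (b), I will split $I'_{h,m,n}(x)$ as the integral over the central interval $[-\pi/(m\delta),\pi/(m\delta)]$ plus a tail integral over $T:=[-\frac{2\pi n^{1/3}}{m+m'},\frac{2\pi n^{1/3}}{m+m''}]\setminus(-\pi/(m\delta),\pi/(m\delta))$. Applying Lemma \ref{lemma:basic_inequality} with $\alpha=\Re L_m(x)$ and $L=L_m(x)$ (noting that $|\arg L_m(x)|\le\pi/3$ is automatic from the principal cube root in \eqref{def:L_k}), the real part of the exponent satisfies the pointwise inequality $\Re\bigl[\tfrac{L_m(x)^3}{2(\Re L_m(x)-iz)^2}+(\Re L_m(x)-iz)\bigr]\le\tfrac{3}{2}\Re L_m(x)$, with equality only at $z=-\Im L_m(x)$. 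Part (a) places this saddle strictly inside the central interval, so on $T$ we have $|z+\Im L_m(x)|\ge\epsilon_0:=\pi/(m\delta)-\tfrac{1}{m}\sqrt[3]{2\zeta(3)}>0$ uniformly in $x\in X$.

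The main obstacle is upgrading this pointwise strict inequality to a uniform positive gap $K>0$, which I will obtain by a compactness argument. The key observation is that $T$ stays in a fixed bounded interval (say $[-3\pi/\delta,3\pi/\delta]$) for all large $n$, because both outer endpoints are bounded by $2\pi/\delta+o(1)$. The continuous function $\Psi(z,x):=\Re\bigl[\tfrac{L_m(x)^3}{2(\Re L_m(x)-iz)^2}+(\Re L_m(x)-iz)\bigr]-\tfrac{3}{2}\Re L_m(x)$ is strictly negative on the compact set $\{(z,x):x\in X,\; z\in[-3\pi/\delta,3\pi/\delta],\;|z+\Im L_m(x)|\ge\epsilon_0\}$, using that $L_m(x)$ does not vanish on the phases where the theorem is applied. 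Hence $\Psi\le -K$ for some $K>0$ depending only on $X$ and $\delta$. Consequently the tail integrand is bounded by $\exp(n^{2/3}[\tfrac{3}{2}\Re L_m(x)-K])$, and since $T$ has length $O(1)$, the tail contributes $O\bigl(n^{-1/3}\exp(n^{2/3}[\tfrac{3}{2}\Re L_m(x)-K])\bigr)$ to $I'_{h,m,n}(x)$, which is absorbed into the stated error term.
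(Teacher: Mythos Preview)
Your approach is essentially the paper's: part~(a) is the same Farey--denominator bound together with $|\Im L_m(x)|\le\frac{1}{m}\sqrt[3]{2\zeta(3)}$, and part~(b) is the same tail estimate via Lemma~\ref{lemma:basic_inequality} followed by a compactness argument for the uniform gap $K$. Two remarks are in order.

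First, your compactness set in (b) is cut out by the single constraint $|z+\Im L_m(x)|\ge\epsilon_0$. This suffices when $|\arg L_m(x)|<\pi/3$, since then the unique equality point in Lemma~\ref{lemma:basic_inequality} is $z=-\Im L_m(x)$. But the present lemma is stated for a general compact $X\subset D$, and whenever $x^m<0$ one has $|\arg L_m(x)|=\pi/3$, so equality also holds at $z=+\Im L_m(x)$. Your set need not exclude that second point (indeed $|2\Im L_m(x)|$ may exceed $\epsilon_0$), so $\Psi$ is not strictly negative on the set as you defined it. The fix is immediate: cut out the compactness set by the condition that actually holds on $T$, namely $|z|\ge\pi/(m\delta)$, which automatically excludes both $\pm\Im L_m(x)$ by part~(a). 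This is precisely how the paper phrases it. Combined with your observation that the outer endpoints of $T$ are bounded by $2\pi/\delta+o(1)$ (so the set is compact), the argument then goes through unchanged.

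Second, your length bound $|T|=O(1)$ is in fact sharper than the paper's crude $|J|=O(n^{1/3})$, yielding a tail of size $O\bigl(n^{-1/3}\exp(n^{2/3}[\tfrac{3}{2}\Re L_m(x)-K])\bigr)$ rather than $O\bigl(\exp(\cdots)\bigr)$; either is absorbed by the stated error.
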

\begin{proof}
(a)
Since the Farey sequence has order $\lfloor \delta n^{1/3} \rfloor$, we have
\[
\frac{ \pi}{ m \delta} \leq \frac{ 2 \pi n^{1/3}}{ m+m'} , \quad
\frac{ 2 \pi n^{1/3}}{ m+m''} \leq \frac{ 2 \pi n^{1/3}}{ m \lfloor \delta n^{1/3} \rfloor}
\]
since $h'/m' < h/m < h''/m''$ are consecutive terms of the Farey sequence. In particular,
with $0<\delta < \pi/ \sqrt[3]{ 2 \zeta(3)}$, we find $| \Im L_m(x)|  \leq \frac{1}{m} \sqrt[3]{ 2 \zeta(3)} < 
\pi/m \delta$.

\noindent
(b)
By Lemma  \ref{lemma:basic_inequality}, if $ |v| \geq \pi/m \delta$, there exists $K>0$ such that for all $x\in X$
\[
\Re\left(
\frac{ L_m(x)^3}{ 2 ( \Re L_m(x) - iz)^2 - iz)^2} + ( \Re L_m(x) - iz)
\right)
\leq \frac{3}{2} \Re L_m(x) - K .
\]
We consider the estimates for the integral on the set
$J = 
[ - \frac{2 \pi n^{1/3}   }{ m+m' }  ,  \frac{2 \pi n^{1/3}}{ m+m''} ]  
  \setminus   ( - \frac{ \pi}{ m\delta}  ,   \frac{ \pi}{ m\delta} )$:
\begin{eqnarray*}
\lefteqn
{
\left|
\int_{  J }
\,
\exp
\left[
n^{2/3} 
\left(
\frac{ L_m(x)^3}{ 2 ( \Re L_m(x) - iz)^2 } + ( \Re L_m(x) - iz)
\right)
\right] \, dz
\right|
}
\\
&&
\qquad \qquad\qquad
=\quad
\int_{   J  }
\,
\exp
\left[
n^{2/3} 
\Re \left(
\frac{ L_m(x)^3}{ 2 ( \Re L_m(x) - iz)^2 } + ( \Re L_m(x) - iz)
\right)
\right] \, dz
\\
&&
\qquad \qquad\qquad
\leq \quad
\int_{   J }
\,
\exp
\left[
n^{2/3} 
 \left(
\frac{3}{2} \Re L_m(x) - K 
\right)
\right] \, dz
\\
&&
\qquad \qquad\qquad
= \quad
O\left(
n^{1/3}
\exp
\left[
n^{2/3} 
 \left(
\frac{3}{2} \Re L_m(x) - K 
\right)
\right] 
\right)  .
\end{eqnarray*}
The estimate in part (b) now follows.
\end{proof}

\begin{proposition}\label{prop:I_hm_saddle}
(a)
Let $x \in X \subset {\mathbb D} \setminus \{ x^m \leq 0\}$ be compact. Then 
\[
I'_{h,m,n}(x)
=
\frac{ 1}{ \sqrt{ 2 \pi n^{2/3}} } \, \sqrt{ \frac{ L_m(x) }{3}} \, \exp\left( \tfrac{3}{2} n^{2/3} L_m(x)\right)
\,
 \left(  1+ O_X( n^{ - 2/3}) \right) .
\]
(b)
Let $x \in X \subset \{ x \in D : x^m \leq 0\}$ be compact. Then we have
\begin{align*}
I'_{h,m,n}(x)
=
\frac{1}{  \sqrt{ 2 \pi n^{2/3}}} \, 
&\left[
\sqrt{\frac{ L_m(x)}{3} } \,e^{ \tfrac{3}{2} n^{2/3} L_m(x)} \right] \left( 1+O_X( n^{-2/3}) \right)
\\
& \qquad
+ 
\frac{1}{ \sqrt{ 2 \pi n^{2/3}}} \, \left[
\sqrt{\frac{ L_m(x)}{3}} \, e^{ \tfrac{3}{2} n^{2/3} L_m(x)} \right]^{-} \left( 1+O_X( n^{-2/3}) \right)  .
\end{align*}
\end{proposition}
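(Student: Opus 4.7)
The plan is to apply the saddle point method directly to the integral $I'_{h,m,n}(x)$, which after the truncation furnished by the preceding lemma is essentially
\[
\frac{1}{2\pi n^{1/3}} \int_{-\pi/(m\delta)}^{\pi/(m\delta)} \exp\!\bigl[n^{2/3} F(z)\bigr]\, dz,
\]
with $F(z)=L^{3}/[2(R-iz)^{2}]+(R-iz)$, where $L=L_{m}(x)$ and $R=\Re L$. Solving $F'(z)=iL^{3}(R-iz)^{-3}-i=0$ gives the cubic $(R-iz)^{3}=L^{3}$. Among the three roots, $z_{0}=-\Im L$ is always real (corresponding to the branch $R-iz=L$), while $z_{1}=\Im L$ is real only when $|\arg L|=\pi/3$, i.e.\ when $x^{m}\le0$. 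Direct computation gives $F(z_{0})=\tfrac{3}{2}L$, $F''(z_{0})=-3/L$, and similarly $F(z_{1})=\tfrac{3}{2}\bar L$, $F''(z_{1})=-3/\bar L$. Lemma \ref{lemma:basic_inequality} has already identified these as the (real) global maxima of $\Re F$ on the integration interval.

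For case (a), I would localize around $z_{0}$: with the rescaling $z=z_{0}+s\,n^{-1/3}$,
\[
n^{2/3}\bigl[F(z)-F(z_{0})\bigr]=-\frac{3s^{2}}{2L}+\frac{F'''(z_{0})}{6}\,\frac{s^{3}}{n^{1/3}}+\frac{F^{(4)}(z_{0})}{24}\,\frac{s^{4}}{n^{2/3}}+O\!\left(\frac{s^{5}}{n}\right).
\]
Because $|\arg L|<\pi/3$ forces $\Re(3/L)>0$, the Gaussian $e^{-3s^{2}/(2L)}$ is absolutely integrable along the real axis, and the tails beyond $|s|\le cn^{1/3}$ are exponentially negligible. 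Expanding the exponential of the subleading terms, the $s^{3}/n^{1/3}$ contribution integrates to zero by parity against the even Gaussian kernel, so the first surviving correction is of order $n^{-2/3}$ (coming from the $s^{4}/n^{2/3}$ and $s^{6}/n^{2/3}$ terms). Evaluating the Gaussian integral as $\sqrt{2\pi L/3}$ in the principal branch yields
\[
I'_{h,m,n}(x)=\frac{1}{\sqrt{2\pi n^{2/3}}}\sqrt{\frac{L_{m}(x)}{3}}\,\exp\!\bigl(\tfrac{3}{2}n^{2/3}L_{m}(x)\bigr)\bigl(1+O_{X}(n^{-2/3})\bigr).
\]

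For case (b), the interval of integration contains both saddles $z_{0},z_{1}$, which remain a uniformly bounded distance apart on the compact set $X$ (since $|\Im L_{m}(x)|$ is bounded below there). I would split the integral at a midpoint between them, where by Lemma \ref{lemma:basic_inequality} $\Re F$ is strictly less than $\tfrac{3}{2}\Re L$ by a uniform amount, making the middle contribution exponentially small compared with either saddle. Applying the Step (a) expansion at each of $z_{0}$ and $z_{1}$ independently—with $L$ replaced by $\bar L$ in every occurrence at $z_{1}$—produces two contributions that are exact complex conjugates of each other, summing to the formula in the statement.

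The main obstacle is securing the sharper error rate $O(n^{-2/3})$ rather than the naive $O(n^{-1/3})$, which depends critically on the symmetric cancellation of the cubic Taylor term against the even Gaussian measure; this in turn requires that the integration interval be extended symmetrically to $\pm\infty$ before evaluation and that $\Re(3/L_{m}(x))$ be bounded below uniformly in $x\in X$, which holds since the compactness of $X$ keeps it away from the unit circle and from the phase boundary. In case (b), additional care is needed to confirm that the two saddles' local expansions do not overlap and that the splitting point yields uniform exponential decay, but both follow from the continuity of $L_{m}(x)$ on the compact $X$.
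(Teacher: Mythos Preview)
Your proposal is correct and follows essentially the same saddle-point approach as the paper: both identify the critical point(s) of $F(z)=L^{3}/[2(R-iz)^{2}]+(R-iz)$, compute $F(z_{0})=\tfrac{3}{2}L$ and $F''(z_{0})=-3/L$, and invoke the standard Laplace asymptotic with its $O(\lambda^{-1})=O(n^{-2/3})$ error. For part (b) the paper streamlines your midpoint-splitting argument by noting that $L^{3}$ is real when $x^{m}<0$, so $F(-z)=\overline{F(z)}$ on $\mathbb{R}$ and splitting at $z=0$ makes the two half-integrals exact conjugates without any separate decay estimate at the split.
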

\begin{proof}
We apply the saddle point method as given in 
\cite[p. 10-11]{Pinsky} and  note that standard arguments will make the estimates there uniform
for $x\in X$.
We let $B(z)=\frac{ L_m(x)^3}{ 2 ( \Re L_m(x) - iz)^2 } + ( \Re L_m(x) - iz)$.

(a)
By the inequality in Lemma \ref{lemma:basic_inequality},   $B(z)$
  has a unique maximum on $ {\mathbb R}$
at $z_0= - \Im L_m(x)$
with
 $B(z_0)= \frac{3}{2 \Re L_m(x)}>0$.
Note that uniqueness follows since, for $x\in X$,  $Li_3(x) <0$ if and only if $x<0$.
Now the second derivative $B''(z)=-3/( \Re L_m(x) - iz)^4$ is Lipschitz continuous on
$\mathbb R$ and
$B''(z_0)= - 3/ L_m(x)$.
Hence, by \cite[p. 11]{Pinsky}, we find that as $n\to\infty$
\begin{eqnarray*}
\int_{    - \frac{ \pi}{ m \delta}  }^{ \frac{ \pi}{ m \delta}  }
e^{ n^{2/3} B(z)} \, dz
=
e^{ n^{2/3} B(z_0)} \, \sqrt{   \frac{ \pi}{ n^{2/3} \cdot \frac{3}{2} \, \frac{1}{ L_m(x)}    }}
\,
\left( 1+O_X( n^{-2/3}) \right) 
\end{eqnarray*}
which simplifies to the desired expression.

\noindent
(b)
Let $z_0= \Im L_m(x)$.
Since $x^m <0$, for $z \in {\mathbb R}$, $B(z)$ has two equal maxima at $\pm z_0$ with 
$B(\pm z_0)=  \frac{3}{2} L_m(x)$. 
Since $|\arg L_m(x)| = \pi/3$ and $x\neq 0$,  $\Im L_m( x) \neq 0$.  
Now $B(-z) = \overline{ B(z)}$ for $z \in {\mathbb R}$ since $L^3$ is real.  
We get desired the equation by applying the saddle point method as in part (a) to
$\displaystyle 
\int_0^{\frac{ \pi}{m \delta}} e^{ n^{2/3} B(z)} \, dz$ and observing that
$\displaystyle
\left[ \int_0^{\frac{ \pi}{m \delta}} e^{ n^{2/3} B(z)}   \, dz \right]^{-}= \int_{- \frac{ \pi}{ m \delta}}^0 e^{n^{2/3}B(z)}\, dz$.
\end{proof}

Note that bounds that come from the above saddle point asymptotics must exclude the interval $[x^*,0]$ from the phase
$R(1)$. This explains choosing between $R(1)$ and $R(1) \setminus [x^*,0]$ as a region below.

\subsection{Estimates for Non-dominant Contributions}

\subsubsection{Bounds for the Integrals $I_{h,m,n}''(x)$}

\begin{lemma}\label{lemma:g_estimate}
Let $x\in X \subset D$ be compact with  $M_X = \max\{ |x| : x \in X\}$. Fix $\delta$ so
$0< \delta < 1/\sqrt[3]{2 \zeta(3)}$ and $n \geq 1/\delta^3$. Then
\newline
(a)
For $k\in {\mathbb N}$ fixed, then as $n\to\infty$
\[
\left|
g_{h,k} \left( x,  \frac{1}{ n^{1/3}} ( \Re L_m(x) - iz)  \right)
\right| = O_{X,k} \left( \frac{1}{ \delta n^{1/3} } \right) .
\]
(b)
For $k \leq \lfloor \delta n^{1/3} \rfloor$, then as $n\to\infty$
\[
\left|
g_{h,k} \left( x,  \frac{1}{ n^{1/3}} ( \Re L_m(x) - iz)  \right)
\right| 
\leq
\frac{2 \sqrt{5}}{ 1-M_X} \delta^2 n^{2/3} + O_{X} \left( \frac{1}{ \delta n^{1/3  }} \right) .
\]
\end{lemma}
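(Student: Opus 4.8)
The plan is to substitute $w = \tfrac{1}{n^{1/3}}\bigl(\Re L_m(x)-iz\bigr)$ into Proposition~\ref{prop:g_bound} and estimate the resulting pieces one at a time. On the Farey arc about $h/k$ the running argument is $h/k + (\psi - h/k)$ with $|\psi - h/k| < \tfrac1{kN}$, $N := \lfloor\delta n^{1/3}\rfloor$, because consecutive Farey fractions satisfy $k+k' > N$ and $k+k'' > N$; hence $z = 2\pi n^{1/3}(\psi-h/k)$ obeys $|z| < \tfrac{2\pi n^{1/3}}{kN}$, and since $n^{1/3}/\lfloor\delta n^{1/3}\rfloor \le 2/\delta$ for $n \ge 1/\delta^3$ we get $k|z| < \tfrac{4\pi}{\delta}$. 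Therefore $\Re w = \Re L_m(x)/n^{1/3} > 0$, $|\Im w| = |z|/n^{1/3} \le \tfrac{4\pi}{k\delta n^{1/3}}$, and $|w| = \tfrac1{n^{1/3}}\sqrt{(\Re L_m(x))^2 + z^2} \le \tfrac1{n^{1/3}}\bigl(\Re L_m(x) + \tfrac{4\pi}{k\delta}\bigr)$. I will use throughout that on the compact $X$ the quantity $\Re L_m(x)$ lies in a closed subinterval $[c_X,C_X]\subset(0,\infty)$ (positivity on the phase $R(m)$), that $C_X \le (2\zeta(3))^{1/3}$ since $|L_m(x)| \le \tfrac1m|2Li_3(x^m)|^{1/3} \le \tfrac1m(2\zeta(3))^{1/3}$, and that $|x| \le M_X < 1$.

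The dominant term in each bound of Proposition~\ref{prop:g_bound} is the prefactor term $\tfrac{2|w|}{1-|x|}k^3$. For part~(a), $k$ is fixed, so $|w| = O_{X,k}(1/(\delta n^{1/3}))$ yields $\tfrac{2|w|k^3}{1-|x|} = O_{X,k}(1/(\delta n^{1/3}))$ at once (absorbing $k^3$ into the constant). For part~(b) I use $k \le N \le \delta n^{1/3}$ and $k|z| < 4\pi/\delta$ to get $|w|^2 k^6 \le \tfrac{k^6(\Re L_m(x))^2 + 16\pi^2 k^4/\delta^2}{n^{2/3}} \le \delta^6 n^{4/3}(\Re L_m(x))^2 + 16\pi^2\delta^2 n^{2/3}$; the hypothesis $\delta < 1/\sqrt[3]{2\zeta(3)}$ forces $\delta\,\Re L_m(x) < 1$, hence $\delta^6(\Re L_m(x))^2 < \delta^4$, so $|w|^2 k^6 < \delta^2 n^{2/3}\bigl(\delta^2 n^{2/3} + 16\pi^2\bigr)$. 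Splitting according to whether $\delta n^{1/3} \ge 2\pi$: in that case the right-hand side is $\le 5\delta^4 n^{4/3}$, so $|w|k^3 \le \sqrt5\,\delta^2 n^{2/3}$; in the complementary range $1 \le \delta n^{1/3} < 2\pi$ it is a bounded multiple of $1/(\delta n^{1/3})$. Either way $\tfrac{2|w|k^3}{1-|x|} \le \tfrac{2\sqrt5}{1-M_X}\delta^2 n^{2/3} + O_X(1/(\delta n^{1/3}))$.

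The remaining terms are absorbed into the $O$-term. The contribution $\tfrac{M|w|^2 k}{1-|x|^2}$ is handled via $|w|^2 k \le \tfrac{k(\Re L_m(x))^2 + kz^2}{n^{2/3}}$, using $kz^2 \le 16\pi^2/(k\delta^2)$, $k(\Re L_m(x))^2/n^{2/3} \le \delta(\Re L_m(x))^2/n^{1/3}$, and $\delta^2 n^{2/3}\ge 1$; this is $O_X(1/(\delta n^{1/3}))$ (with the constant depending also on $k$ in part~(a)). The two terms carrying $|x|$ to a positive power decay faster than any power of $n$: from $|\Im w| \le \tfrac{4\pi}{k\delta n^{1/3}}$ one gets $\tfrac{\pi}{k|\Im w|} \ge \tfrac{\delta n^{1/3}}4$ and $\tfrac{\pi}{|w|} \ge \tfrac{\pi n^{1/3}}{C_X + 4\pi/(k\delta)}$, so $|x|^{\pi/(k|\Im w|)}$ and $|x|^{\pi/|w|}$ are $O(M_X^{\,c\,n^{1/3}})$ for some $c>0$, while the cofactors $(1-e^{-\pi\Re w/(k|\Im w|)})^{-1}$ and $(1-e^{-\pi\Re w/|w|})^{-1}$ are at most $O_X(1/\delta)$ by $1-e^{-u}\ge u/2$ for $0<u\le1$; since $\delta$ is fixed, the products are $o(n^{-A})$ for every $A$. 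If $z=0$ the argument $w$ is real and positive, and one invokes part~(b) of Proposition~\ref{prop:g_bound} instead, whose bound is the $z\ne0$ bound with the term $\tfrac{|x|^{\pi/(k|\Im w|)}}{1-e^{-\pi\Re w/(k|\Im w|)}}$ deleted, so the same estimates apply \emph{a fortiori}.

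The step that genuinely needs attention is the bookkeeping in part~(b) that yields the precise coefficient $\tfrac{2\sqrt5}{1-M_X}$: it rests on $\delta\,\Re L_m(x)<1$ — which is exactly why the hypothesis demands $\delta < 1/\sqrt[3]{2\zeta(3)}$ together with $C_X \le (2\zeta(3))^{1/3}$ — and on the $\delta n^{1/3} \gtrless 2\pi$ case split, both carried out uniformly over $x\in X$. Everything else follows routinely once the Farey-arc inequality $|z| < 2\pi n^{1/3}/(kN)$ and the elementary estimates for $\Re w$, $\Im w$, $|w|$ are in place.
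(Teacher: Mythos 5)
Your proposal is correct and follows essentially the same route as the paper: substitute $w=\tfrac{1}{n^{1/3}}(\Re L_m(x)-iz)$ into Proposition~\ref{prop:g_bound}, use the Farey-arc bound on $|z|$ and $\Re L_m(x)\le\sqrt[3]{2\zeta(3)}<1/\delta$ to control $|w|$, note that the terms carrying $|x|^{\pi/(k|\Im w|)}$ and $|x|^{\pi/|w|}$ are superpolynomially small, and read off the dominant $\tfrac{2|w|k^3}{1-|x|}$ contribution, replacing $k$ by $\delta n^{1/3}$ for part (b). Your extraction of the coefficient $\tfrac{2\sqrt5}{1-M_X}$ (via $k|z|\le 4\pi/\delta$ and the $\delta n^{1/3}\gtrless 2\pi$ split) and your explicit treatment of the $z=0$ case are minor variations in bookkeeping, in fact somewhat more careful than the paper's, but not a different method.
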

\begin{proof}
We will bound the terms in equation (\ref{eq:g_hk_inequality})  individually. For convenience, we 
work with the variable $v= 2 \pi n^{1/3} z$. 
Recall that
$\sqrt[3]{2 \zeta(3)}< 1/\delta$
and
 $|v| \leq  \frac{1}{k  N   }  <\frac{1}{ k \delta n^{1/3}}$,
where $N= \lfloor  \delta n^{1/3} \rfloor$ is the order of the Farey fractions.
Introduce 
$C_X = \max \{ 1/ \Re L_m(x) : x\in X\}$. 
We start with the
easy estimate:  
\[
|w| = \left| \frac{\Re L_m(x)}{ n^{1/3}} -  iv  \right| \leq
\frac{1}{n^{1/3}} \sqrt{  (2 \zeta(3))^{2/3} + \frac{1}{ (k \delta)^2}  }
\leq
 \frac{ \sqrt{5}}{ \delta n^{1/3}} 
\]
since $\sqrt[3]{2 \zeta(3)}$ is an upper bound for $\Re L_m(x)$, $x\in X$.

Next by using that $1/(1-e^{-t})< 1+ 1/t$ for $t>0$, we can show that
\begin{align*}
 \frac{1}{ 1- e^{ - \frac{\pi \Re w}{ k | \Im w|}}    }
 \leq  1+ \frac{ \sqrt{5} C_X}{ \pi \delta} ,
 \qquad
  \frac{1}{ 1 - e^{  - \frac{ \pi \Re w}{ |w|}}  } 
 \leq
 1+ \frac{ \sqrt{5} C_X}{ \pi \delta}
\end{align*}
Using $e^{-t}\leq 27/(et)^3$ for $t>0$, we obtain
\begin{align*}
 |x|^{ \frac{ \pi}{ k | \Im w  | } } 
 \leq
  \frac{ - 27}{ ( 2 \pi e n^{1/3} \delta \ln M_X)^3} ,
 \qquad
   |x|^{ \frac{ \pi}{ |w|}  } 
   \leq 
     \frac{ - 27 \sqrt{125}}{ (\pi  e\delta n^{1/3} \ln M_X)^3   }   .
\end{align*}
Hence, $\left|
g_{h,k} \left( x,   \frac{\Re L_m(x)}{ n^{1/3}} - iv  \right)
\right|$ 
is bounded above by
\begin{align*}
& \frac{ 2  \sqrt{5}}  { \delta n^{1/3} (1- M_X)} 
  \left[
 k^3 + \frac{ -27}{ ( 2 \pi e \delta n^{1/3} \ln M_X)^3  } 
   \cdot 
 \left( 1+ \frac{ 2 C_X}{ \pi \delta} \right)
 \right]
 \\
 & \qquad  +
 \frac{1}{ ( 1 - M_X)^2} 
 \left[
\frac{5 M  k}{  \delta^2 n^{2/3}} 
  +
 \left(
 2 \cdot 
 \left(  1+ \frac{ \sqrt{5} C_X}{ \pi \delta} \right)^3  +  1
 \right)
 \cdot
 \frac{ - 27 }{ ( \pi e \delta n^{1/3} \ln M_X)^3}
 \right]  .
 \end{align*}
We rewrite this upper bound as a polynomial in $k$ with constants $A$ and $B$ independent of
$\delta, k$, and $n$:
\begin{equation}\label{eq:lemma2}
\frac{ 2 \sqrt{5}  }{ \delta n^{1/3} (1-M_X)} k^3 + 
 \frac{1}{ ( 1 - M_X)^2} \, \frac{ 5M }{ \delta^2 n^{2/3}  } k +A \, \frac{1}{ \delta^5 n^{4/3}} + B \, \frac{1}{ \delta^6 n} .
\end{equation}
 With $k$ fixed, we obtain part (a) since the bound in (\ref{eq:lemma2}) is $O_X( 1/[ \delta n^{1/3} ])$; while if we
 replace
 $k$ with $\delta n^{1/3}$, we get the bound in part (b).
 \end{proof}

\begin{lemma}\label{lemma:I''_estimate}
Let $x \in X$ where $X$ is a compact subset of either $R(1) \setminus [x^*,0]$ or $R(2)$. Then
$I_{h,m,n}''(x) = O_{X,\delta}( I_{h,m,n}'(x)  / n^{1/3} )$, $m=1,2$.
\end{lemma}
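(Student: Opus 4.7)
The plan is to show that the integrands of $I'_{h,m,n}(x)$ and $I''_{h,m,n}(x)$ differ by a multiplicative factor $\exp(g_{h,m}) - 1$ that is uniformly $O(n^{-1/3})$ on the relevant Farey arc, and then to compare the resulting bound against $|I'_{h,m,n}(x)|$ using the saddle-point asymptotics already established in Proposition \ref{prop:I_hm_saddle}. Since $m\in\{1,2\}$ is fixed, applying Lemma \ref{lemma:g_estimate}(a) with $k=m$ yields
\[
\left| g_{h,m}\!\left(x,\, \tfrac{1}{n^{1/3}}(\Re L_m(x) - iz)\right)\right| \;\leq\; \frac{C_{X,\delta}}{n^{1/3}},
\]
uniformly for $x\in X$ and for $z$ in the Farey arc around $h/m$. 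Since this bound tends to zero, the elementary estimate $|e^{w}-1|\le 2|w|$ for $|w|\le 1$ gives $|\exp(g_{h,m}) - 1| \le 2C_{X,\delta}/n^{1/3}$ uniformly for all large $n$.

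Next, pulling this uniform bound out of the integral defining $I''_{h,m,n}(x)$ produces
\[
|I''_{h,m,n}(x)| \;\leq\; \frac{2C_{X,\delta}}{n^{1/3}} \cdot \frac{1}{2\pi n^{1/3}} \int \exp\!\bigl[n^{2/3}\, \Re B(z)\bigr]\, dz,
\]
where $B(z) = L_m(x)^3/[2(\Re L_m(x) - iz)^2] + (\Re L_m(x) - iz)$. Because the hypothesis excludes $[x^*,0]$ from $R(1)$, and because $R(2)$ lies in the open left half-plane (hence avoids the imaginary axis, the only locus where $x^2$ could be nonpositive real), the exceptional case $x^m \le 0$ never occurs on $X$. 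By Lemma \ref{lemma:basic_inequality}, $\Re B(z)$ therefore attains a \emph{unique} maximum on $\mathbb R$ at $z_0 = -\Im L_m(x)$ with value $\tfrac32 \Re L_m(x)$. A standard Laplace expansion, uniform on the compact set $X$, evaluates the real integral above as $O_X\!\bigl(n^{-1/3}\exp[\tfrac32 n^{2/3}\Re L_m(x)]\bigr)$, and multiplying by the prefactor $1/(2\pi n^{1/3})$ yields $O_X\!\bigl(n^{-2/3}\exp[\tfrac32 n^{2/3}\Re L_m(x)]\bigr)$. On the other hand, Proposition \ref{prop:I_hm_saddle}(a) supplies $|I'_{h,m,n}(x)|$ of exactly the same order with prefactor $\sqrt{|L_m(x)|/3}/\sqrt{2\pi}$ bounded away from zero on $X$, so dividing delivers $|I''_{h,m,n}(x)| = O_{X,\delta}(|I'_{h,m,n}(x)|/n^{1/3})$.

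The main technical obstacle will be securing the uniformity of the Laplace bound in $x \in X$. Concretely, one must verify that $|L_m(x)|$, the Hessian $|\Re B''(z_0)| = 3|\Re(1/L_m(x))|$, and the bell-width around $z_0$ are all uniformly bounded above and below on $X$. These facts follow from continuity of $L_m$, compactness of $X$, and the strict inequality $|\arg L_m(x)| < \pi/3$, which is precisely what the exclusion of $[x^*,0]$ from $R(1)$ and the open-left-half-plane containment of $R(2)$ guarantee.
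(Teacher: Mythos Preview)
Your argument is correct and follows the paper's approach: both reduce to the bound $|e^{g_{h,m}}-1|=O_{X,\delta}(n^{-1/3})$ obtained from Lemma~\ref{lemma:g_estimate}(a) with $k=m$ fixed. The paper's proof is extremely terse (and in fact contains a typo, writing $|e^{g_{h,m}}|$ where $|e^{g_{h,m}}-1|$ is intended), whereas you supply the additional step of bounding $\int e^{n^{2/3}\Re B(z)}\,dz$ by Laplace's method and comparing with $|I'_{h,m,n}(x)|$ via Proposition~\ref{prop:I_hm_saddle}(a)---a worthwhile addition, since the integrand of $I'$ is complex-valued and one cannot simply pull out the sup-norm of $|e^{g_{h,m}}-1|$ and be left with $|I'|$ directly.
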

\begin{proof}
By the definition of $I_{h,m,n}''(x)$ given in subsection \ref{subsection:major_arcs},
it is enough to
 observe that there exists a positive constant $K_{X,m,\delta}$ such that
\[
\left|
e^{ g_{h,m} \left( x,  \frac{1}{ n^{1/3}} ( \Re L_m(x) - iz)  \right)       } 
\right|
\leq \frac{K_{X, m,\delta}}{ n^{1/3}} 
\]
by the above lemma.

\end{proof}

\subsubsection{Bounds for Minor Arcs}

\begin{lemma}\label{lemma:bound_omega}
Let $x \in X$ where $X$ is a compact subset of unit disk. Then
\[
|  \omega_{h,k,n}(x) |
\leq
2^{1/12}\exp \left( \frac{k^2}{ 16h^2} Li_3( M_X) \right) 
\leq 
2^{1/12} \exp \left( \frac{k^2  }{ 16}\, ( \zeta(3) - \ln(1-M_X) ) \right) 
\]
where $M_X = \max\{ |x| : x\in X\}$ and $\omega_{h,k,n}(x)$ is given in (\ref{eq:omega}).
\end{lemma}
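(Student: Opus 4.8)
The plan is to bound $|\omega_{h,k,n}(x)|$ directly from its defining formula \eqref{eq:omega}. Taking absolute values (equivalently, real parts of the logarithm), the factor $e^{-2\pi i n h/k}$ has modulus $1$ and drops out, so for $k\geq 2$ we must control
\[
\left|\,\tfrac{1}{12k}\ln(1-x^k) - A_{h,k}(x,0)\,\right|
\leq \tfrac{1}{12k}\bigl|\ln(1-x^k)\bigr| + \bigl|A_{h,k}(x,0)\bigr|,
\]
and for $k=1$ just $\tfrac{1}{12}|\ln(1-x)|$. The term $\tfrac{1}{12}|\ln(1-x^k)|$ is handled using $|\ln(1-x^k)|\leq -\ln(1-|x|^k)\leq -\ln(1-M_X)$ (since $1-x^k$ stays away from the branch cut for $x$ in the disk, and $|x^k|\leq M_X$); dividing by $12k$ only helps, and this produces the $-\ln(1-M_X)$ contribution. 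The outer constant $2^{1/12}$ presumably comes from crudely bounding $\exp\bigl(\tfrac{1}{12k}(-\ln(1-M_X))\bigr)$ in a way that extracts a clean $\zeta(3)$-free prefactor, or from the $k=1$ case where $M_X$ can be close to $1$; in the write-up I would simply note $e^{\frac{1}{12k}(-\ln(1-M_X))}\leq e^{-\ln(1-M_X)}$ fits inside the stated right-hand side once the dominant $k^2$ term is in place, and absorb the harmless constant into the explicit $2^{1/12}$.

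The substantive step is bounding $A_{h,k}(x,0) = -\tfrac14\sum_{k\nmid \ell}\tfrac{x^\ell}{\ell}\csc^2(\pi\ell h/k)$. I would use $|x^\ell/\ell|\leq M_X^\ell/\ell$ and the elementary trigonometric bound $|\csc(\pi\ell h/k)|\leq k/2$ whenever $k\nmid\ell$ — this is exactly the estimate $|\csc(\pi/k)|\leq k/2$ already invoked in the proof of Lemma~\ref{lemma:A_bound}, applied to the reduced angle $\pi\ell h/k \bmod \pi$, using $(h,k)=1$ so that $\ell h$ is never a multiple of $k$. This yields
\[
\bigl|A_{h,k}(x,0)\bigr| \leq \tfrac14\cdot\tfrac{k^2}{4}\sum_{\ell=1}^\infty \frac{M_X^\ell}{\ell} = \tfrac{k^2}{16}\bigl(-\ln(1-M_X)\bigr).
\]
That already gives the right-hand inequality in the lemma (the $\tfrac{k^2}{16}\zeta(3)$ slack in the claimed bound is wasteful but correct, since $-\ln(1-M_X)\leq \zeta(3)-\ln(1-M_X)$). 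For the sharper middle expression $\tfrac{k^2}{16h^2}Li_3(M_X)$ I would instead keep the $\csc^2$ more carefully: group the sum over $\ell$ in residue classes mod $k$ and note that for a fixed residue $r$ with $1\leq r\leq k-1$ the angle $\pi\ell h/k\equiv \pi r h/k$, and then sum $\sum_{r=1}^{k-1}\csc^2(\pi r h/k)$ — but this requires care, and a cleaner route is to bound $\csc^2(\pi\ell h/k)$ by relating the nearest-multiple-of-$k$ distance $\|\ell h/k\|$ from below by $1/k$ but grouping the resulting $\sum 1/\ell^3$-type tail; with $x^\ell$ weighting only $\ell\geq 1$ with $k\nmid\ell$, the worst case is $\ell$ near a multiple of $k$, contributing $Li_3$-like sums. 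The factor $1/h^2$ appears because, since $(h,k)=1$, the distance $\|\ell h/k\|\geq 1/k$ can be sharpened using $h$ when $\ell$ is small.

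The main obstacle is getting the precise middle bound with the $1/(16h^2)Li_3(M_X)$ form rather than the crude $\tfrac{k^2}{16}(-\ln(1-M_X))$; the latter falls out immediately from $|\csc|\leq k/2$ and $\sum M_X^\ell/\ell = -\ln(1-M_X)$, so if the sharper form is not needed downstream I would present only that. Concretely, my write-up would: (1) reduce to the logarithm and discard the unimodular factors; (2) bound $\tfrac{1}{12k}|\ln(1-x^k)|$ by $-\ln(1-M_X)$ using the branch-cut avoidance and $|x^k|\leq M_X$; (3) invoke the $|\csc(\pi\ell h/k)|\leq k/2$ estimate from Lemma~\ref{lemma:A_bound} together with $(h,k)=1$ to bound $|A_{h,k}(x,0)|\leq \tfrac{k^2}{16}(-\ln(1-M_X))$; (4) combine, note $e^{-\ln(1-M_X)/(12k)}\leq 2^{1/12}$-times the rest or simply that the $k\geq 2$ bound dominates, exponentiate, and observe $-\ln(1-M_X)\leq\zeta(3)-\ln(1-M_X)$ to land on the stated inequality; (5) dispatch $k=1$ separately using only the $\tfrac1{12}\ln(1-x)$ term, which is $\leq 2^{1/12}$ after exponentiation times the (trivially $\geq 1$) exponential factor. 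No step beyond (3) should require anything not already in the excerpt.
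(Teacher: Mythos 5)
Your core estimate for $A_{h,k}(x,0)$ is fine — since $(h,k)=1$ and $k\nmid\ell$, the distance from $h\ell/k$ to the nearest integer is at least $1/k$, so $|\csc(\pi h\ell/k)|\leq k/2$ and $|A_{h,k}(x,0)|\leq \frac{k^2}{16}\bigl(-\ln(1-M_X)\bigr)$ — but two concrete gaps remain. First, your handling of the factor $(1-x^k)^{1/(12k)}$ would fail as written: you bound $\frac{1}{12k}\bigl|\ln(1-x^k)\bigr|$ by $\frac{1}{12k}\bigl(-\ln(1-M_X)\bigr)$ and then propose to ``absorb'' the resulting factor, but that factor is $(1-M_X)^{-1/(12k)}$, which is unbounded as $M_X\to 1$, whereas the slack available in the stated bound, $2^{1/12}\exp\bigl(\frac{k^2}{16}\zeta(3)\bigr)$, is fixed for fixed $k$; so your chain does not give the lemma uniformly over compact $X\subset\mathbb D$. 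The paper's step is the correct one: only the real part of the logarithm matters, $\bigl|\exp\bigl(\tfrac{1}{12k}\ln(1-x^k)\bigr)\bigr| = |1-x^k|^{1/(12k)}\leq 2^{1/(12k)}\leq 2^{1/12}$ since $|1-x^k|\leq 2$ on the disk. That is exactly where $2^{1/12}$ comes from; no absorption argument is needed, and the one you sketch is not valid.

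Second, the lemma asserts a two-step chain, and your plan (by your own admission) only delivers the outer bound: the $h$-dependent factor $\exp\bigl(\frac{k^2}{16h^2}Li_3(M_X)\bigr)$ is never derived, and your remarks about residue classes and sharpening $\|\ell h/k\|$ stay at the level of a guess. The paper gets it by splitting the sum defining $A_{h,k}(x,0)$ according to whether the fractional part of $h\ell/k$ lies below or above $1/2$ and using the elementary bounds $\sin\theta\geq 2\theta/\pi$ on $[0,\pi/2]$ and $\sin\theta\geq 2(1-\theta/\pi)$ on $[\pi/2,\pi]$: the first piece produces $\frac{1}{4}\cdot\frac{k^2}{4h^2}\sum_\ell M_X^\ell/\ell^3=\frac{k^2}{16h^2}Li_3(M_X)$, the second $\frac{k^2}{16}\bigl(-\ln(1-M_X)\bigr)$, and the final expression follows using $h\geq 1$ and $Li_3(M_X)\leq\zeta(3)$. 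Your single crude bound is actually enough for the rightmost expression (which is all that is used later, in Lemma \ref{lemma:delta0}), so once the $2^{1/12}$ step is repaired your argument proves a sufficient statement; but as a proof of the lemma as stated, the middle inequality is missing.
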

\begin{proof}
We bound each component separately in the definition of $\omega_{h,k,n}(x)$.
We start with the first factor
\begin{align*}
 \left|  \exp\left( \frac{1}{12k} \ln(1-x^k) \right) \right| = \exp \left( \frac{1}{12k}\Re \ln(1-x^k) \right)
 \leq 2^{1/12k} .
\end{align*}
Next we obtain an intermediate bound
\begin{align*}
\left| \exp \left(- \frac{1}{4} \sum_{ \ell \nmid  k} \frac{ x^\ell}{ \ell} \csc^2( \pi h \ell /k) \right) \right|
\leq
\exp \left( \frac{1}{4} \sum_{ \ell \nmid  k}  \frac{ |x|^\ell}{ \ell}  \,  \csc^2( \pi h \ell /k) \right)   .
\end{align*}
Break up the above series into two sums with the sets of indices 
\[
J_1 = \{ \ell : \ell \nmid k, \,  h \ell/k \bmod 1 < 1/2\},
\quad
J_2 = \{  \ell \nmid k, \, 1/2 \geq  h \ell/k \bmod 1 \}.
\]
and
 use the elementary estimates  $\sin \theta \geq 2 \theta/\pi$, for $0\leq \theta \leq \pi/2$,
and $\geq 2(1- \theta/\pi)$ for $\pi/2 \leq \theta \leq \pi$
to obtain the two bounds below to complete the proof:
\begin{align*}
\exp \left( \frac{1}{4} \sum_{ \ell \in J_1}  \frac{ |x|^\ell}{ \ell}  \,  \csc^2( \pi h \ell /k) \right) 
&\leq
\exp \left( \frac{k^2}{  16h^2 } Li_3( M_X) \right)  ,
\\
\exp \left( \frac{1}{4} \sum_{ \ell \in J_2}  \frac{ |x|^\ell}{ \ell}  \,  \csc^2( \pi h \ell /k) \right) 
&\leq
\exp \left( -\frac{k^2}{16} \ln(1-M_X)    \right)  .
\end{align*}
\end{proof}

\begin{lemma}
Let $x \in X$ where $X$ is a compact subset of $R(m)$, $m=1,2$.
There exists a positive constant $a_{X}$ such that
\[
 \Re
\left[
\frac{  L_k(x)^3}{ 8 \pi^2 ( \alpha - iv)^2} + 2 \pi n ( \alpha - iv)
\right]
\leq  
n^{2/3}
\left(
\frac{3}{2} \Re L_m(x) 
+
a_{X}
\right), \quad k \neq m, \, x \in X ,
\]
where $\alpha$ is given in equation (\ref{eq:alpha}).
\end{lemma}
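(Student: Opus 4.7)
The plan is to apply Lemma \ref{lemma:basic_inequality} termwise, substitute the definition of $\alpha$, and then extract the needed inequality from the defining property of the phase $R(m)$.

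First I check the hypothesis of Lemma \ref{lemma:basic_inequality}. Since $L_k(x) = \frac{1}{k}\sqrt[3]{2 Li_3(x^k)}$ is taken with the principal cube root, one has $\arg L_k(x) = \frac{1}{3}\arg\bigl(2 Li_3(x^k)\bigr)$, which lies in $(-\pi/3,\pi/3]$. Hence Lemma \ref{lemma:basic_inequality} applies with $L = L_k(x)$, giving
\begin{equation*}
\Re\frac{L_k(x)^3}{8\pi^2(\alpha-iv)^2} \;\leq\; \frac{(\Re L_k(x))^3}{8\pi^2\alpha^2}.
\end{equation*}

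Next I substitute $\alpha = \Re L_m(x)/(2\pi n^{1/3})$ from (\ref{eq:alpha}). Since $8\pi^2\alpha^2 = 2(\Re L_m(x))^2/n^{2/3}$ and $2\pi n\alpha = n^{2/3}\Re L_m(x)$, combining the two pieces gives
\begin{equation*}
\Re\Bigl[\tfrac{L_k(x)^3}{8\pi^2(\alpha-iv)^2} + 2\pi n(\alpha - iv)\Bigr]
\;\leq\;
n^{2/3}\left(\frac{3}{2}\Re L_m(x) \;+\; \frac{(\Re L_k(x))^3-(\Re L_m(x))^3}{2(\Re L_m(x))^2}\right),
\end{equation*}
after regrouping $\frac{(\Re L_k)^3}{2(\Re L_m)^2} + \Re L_m$ around the target $\frac{3}{2}\Re L_m$.

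Finally I invoke Definition \ref{def:phase}: for $x \in R(m)$ and $k \neq m$ one has $\Re L_m(x) > \Re L_k(x)$. Since $t \mapsto t^3$ is monotone on $\mathbb R$, it follows that $(\Re L_k(x))^3 - (\Re L_m(x))^3 \leq 0$, so the fractional correction is nonpositive and the displayed inequality holds with any fixed $a_X > 0$, uniformly in $k$ and in $x \in X$. The only point needing care is that the denominator $(\Re L_m(x))^2$ is bounded away from zero on $X$; this follows from compactness of $X \subset R(m)$ together with the fact that $\Re L_m > 0$ throughout the relevant phase (which is exactly why $\alpha > 0$ was a legitimate contour radius to begin with).

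I do not expect a genuine obstacle here: once one notices that the principal-branch convention forces $|\arg L_k(x)| \leq \pi/3$, everything reduces to the routine substitution of $\alpha$ and the monotonicity of the cube on $\mathbb R$. The only place where compactness of $X$ is actually used is the uniform lower bound on $\Re L_m(x)$, which controls the denominator; the nonpositivity of the correction is automatic from the phase condition itself and requires no compactness argument.
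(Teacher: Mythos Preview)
Your derivation up to and including the displayed inequality
\[
\Re\Bigl[\tfrac{L_k(x)^3}{8\pi^2(\alpha-iv)^2} + 2\pi n(\alpha - iv)\Bigr]
\;\leq\;
n^{2/3}\Bigl(\tfrac{3}{2}\Re L_m(x) + \tfrac{(\Re L_k(x))^3-(\Re L_m(x))^3}{2(\Re L_m(x))^2}\Bigr)
\]
matches the paper's route exactly: apply Lemma~\ref{lemma:basic_inequality}, substitute $\alpha$, and regroup. The issue is what happens next.

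The statement as printed has a sign slip: the inequality that is actually proved in the paper and actually used in Lemma~\ref{lemma:delta0} is with $-a_X$, not $+a_X$. (Look at the exponent $-a_X n^{2/3} + \tfrac{3}{2}n^{2/3}\Re L_m(x)$ in the proof of Lemma~\ref{lemma:delta0}.) With $+a_X$ the lemma is vacuous --- as you yourself observe, any $a_X>0$ works once you know the correction is nonpositive --- and would be useless for bounding the minor arcs.

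For the intended version you must show that the correction term is bounded above by some \emph{strictly negative} constant, uniformly in $x\in X$ and in $k\neq m$. Nonpositivity alone is not enough. The paper does this in two steps: first, for each fixed $k\neq m$, compactness of $X$ and the strict inequality $\Re L_k(x)<\Re L_m(x)$ on $R(m)$ give a positive minimum $a_{X,k}$ of the gap; second, one must argue that $\inf_{k\neq m} a_{X,k}>0$. This last point is exactly where your write-up falls short: the paper observes that since $L_k(x)\to 0$ uniformly on $X$ as $k\to\infty$, the gaps $a_{X,k}$ converge to $\min_{X}\tfrac{3}{2}\Re L_m(x)>0$, so the infimum over $k$ is attained and positive. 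Your remark that ``compactness is only used for the denominator'' is therefore not quite right: compactness is essential both for each fixed $k$ and, via the uniform limit, for controlling all $k$ simultaneously.
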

\begin{proof}
In subsection \ref{subsection:major_arcs}, we saw that
\begin{align*}
\frac{1}{n^{2/3}} 
\Re  &
\left[
\frac{  L_k(x)^3}{ 8 \pi^2 ( \alpha - iv)^2} + 2 \pi n ( \alpha - iv)
\right] 
\\
& \qquad \qquad \quad
=
\Re
\left[
\frac{ L_k(x)^3}{ 2 ( \Re L_m(x) - 2 \pi i n^{1/3} v )^2}
+
( \Re L_m(x) - 2 \pi i n^{1/3} v )
\right]  .
\end{align*}
By Lemma \ref{lemma:basic_inequality}, we have the strict bound for  $x\in X$ and $k \neq m$
\begin{align*}
\Re
\left[
\frac{ L_k(x)^3}{ 2 ( \Re L_m(x) - 2 \pi i n^{1/3} v )^2}
\right.
&+
\left.
( \Re L_m(x) - 2 \pi i n^{1/3} v )
\right]
\\
&  \qquad 
\leq 
\frac{ ( \Re L_k(x) )^3}{ 2 ( \Re L_m(x))^2} + \Re L_m(x)
< 
\frac{3}{2} \Re L_m(x) .
\end{align*}
Hence  the difference, for $k \neq m$, has  a positive minimum $a_{X,k}$ on $X$:
\[
\frac{3}{2} \Re L_m(x)
-
\Re
\left[
\frac{ L_k(x)^3}{ 2 ( \Re L_m(x) - 2 \pi i n^{1/3} v )^2}
+
( \Re L_m(x) - 2 \pi i n^{1/3} v )
\right] \geq a_{X,k} >0, \, x \in X   ,
\]
by compactness. 
Consider $a_{X,k}$ as a sequence, then it converges to the minimum of $\frac{3}{2} \Re L_m(x)$ on $X$
which is positive.  
Hence, 
$\inf \{ a_{X,k} : k \neq m\}$ is attained for some index, say $k_0 \neq m$. In particular, we have
\[
a_{X,k} \geq a_{X,k_0} > 0, \quad k \neq m .
\]
For simplicity, we write $a_X$ for $a_{X,k_0}$. The inequality in the lemma follows. 
\end{proof}

\subsection{Conclusion of proof of Theorem \ref{thm:Main1} }

\begin{lemma}\label{lemma:delta0}
Let $x\in X$ be a compact subset  of $R(m)$, $m=1,2$. Then there exists $\delta_0>0$ and $\eta>0$ such that
\[
\left|
\sum_{ h/k \in F_N, k \neq m}
\omega_{h,k,n}(x) I_{h,k,n}(x)
\right|
\leq
2^{1/12}
\exp\left(   \, 
    (\tfrac{3}{2} \Re L_m(x)  - \eta) n^{2/3} + O_{X,\delta_0}(1)
\right)
\]

\end{lemma}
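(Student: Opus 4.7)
The plan is to bound each summand $|\omega_{h,k,n}(x)I_{h,k,n}(x)|$ uniformly in $h/k$ with $k \neq m$, and then use the strict gap from the preceding (unnumbered) lemma to extract a positive $\eta$ after summing over the $O(N^2)$ Farey fractions in $F_N$.

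For a single summand I would assemble three ingredients. Lemma~\ref{lemma:bound_omega} supplies $|\omega_{h,k,n}(x)| \leq 2^{1/12}\exp(C_X k^2)$ for some $C_X$ depending only on $X$. For $I_{h,k,n}(x)$ I would estimate by (arc length)$\times$(max integrand): the arc length is at most $2/(kN)$, since consecutive denominators in $F_N$ of order $N = \lfloor \delta n^{1/3}\rfloor$ satisfy $k+k', k+k''\geq N+1$; the real part of the main exponent is bounded by the preceding unnumbered lemma by $n^{2/3}\bigl(\tfrac{3}{2}\Re L_m(x) - a_X\bigr)$ uniformly in $v$ whenever $k\neq m$; and $|e^{g_{h,k}(\cdot)}|$ is controlled by Lemma~\ref{lemma:g_estimate}(b) (since $k \leq N$) by $\exp\bigl[\tfrac{2\sqrt 5}{1-M_X}\delta^2 n^{2/3} + O_X(1)\bigr]$.

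Combining these and using $k^2 \leq N^2 \leq \delta^2 n^{2/3}$ to absorb the $C_X k^2$ term, the per-summand bound takes the form
\[
|\omega_{h,k,n}(x)I_{h,k,n}(x)| \leq \frac{2\cdot 2^{1/12}}{kN}\,\exp\!\Big[n^{2/3}\bigl(\tfrac{3}{2}\Re L_m(x) - a_X + K_X \delta^2\bigr) + O_X(1)\Big],
\]
with $K_X := C_X + \tfrac{2\sqrt 5}{1-M_X}$. Summing over the $O(N^2)$ elements of $F_N$ introduces only a prefactor polynomial in $n$, contributing $O(\log n)$ in the exponent. I then pick $\delta_0 > 0$ small enough that $K_X \delta_0^2 < a_X$, set $\eta_0 := a_X - K_X\delta_0^2 > 0$, and absorb the $O(\log n)$ term by taking $\eta := \eta_0/2$: for all sufficiently large $n$ the excess $(\eta_0/2) n^{2/3} - O(\log n)$ is positive and its residual fits into $O_{X,\delta_0}(1)$, which is the form demanded by the statement.

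The main obstacle is the dual role of $\delta$: a smaller $\delta$ is needed to keep both the $\frac{2\sqrt 5}{1-M_X}\delta^2 n^{2/3}$ contribution from Lemma~\ref{lemma:g_estimate}(b) and the $C_X k^2 \leq C_X \delta^2 n^{2/3}$ contribution from Lemma~\ref{lemma:bound_omega} below the gap $a_X n^{2/3}$, while $\delta$ cannot be shrunk freely without weakening the dissection. What rescues the argument is that both offending contributions scale as $\delta^2$ whereas the gap $a_X$ supplied by the phase inequality is a strictly positive constant \emph{independent} of $\delta$, so a single small threshold $\delta_0$ resolves the tension simultaneously.
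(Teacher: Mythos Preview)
Your argument is correct and follows the same strategy as the paper: bound the integrand on each Farey arc by combining Lemma~\ref{lemma:bound_omega}, Lemma~\ref{lemma:g_estimate}(b), and the preceding phase-gap lemma, then choose $\delta_0$ small enough that the $\delta^2 n^{2/3}$ contributions are beaten by the gap $a_X n^{2/3}$.

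The one place you work harder than necessary is the summation step. You bound each arc by $2/(kN)$, sum over $O(N^2)$ fractions, and then halve $\eta$ to swallow an $O(\log n)$ in the exponent. The paper sidesteps this entirely by observing that the Farey arcs are disjoint subintervals of $[0,1)$, so the sum of all their lengths is at most $1$; once the integrand is bounded uniformly, the sum over $h/k\in F_N$, $k\neq m$, costs only a factor $\le 1$, and no shrinking of $\eta$ is needed. (In fact your own sum $\sum_{h/k}\tfrac{2}{kN}=\tfrac{2}{N}\sum_{k\le N}\phi(k)/k\le 2$ is already $O(1)$, so the ``polynomial prefactor'' and the passage from $\eta_0$ to $\eta_0/2$ are both unnecessary, though harmless.) Otherwise the two proofs are the same.
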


\begin{proof}
We need to estimate $I_{h,k,n}(x)$ where $k \neq m$:
\begin{align*}
I_{h,k,n}(x)
=
\omega_{h,k,n}(x) \,
\int_{- \frac{1}{ k (k+ k')}}^{\frac{1}{  k(k+k'')}}
\exp
\left(
\frac{  L_k(x)^3}{ 8 \pi^2 ( \alpha - iv)^2} + 2 \pi n ( \alpha - iv)
\right)
\exp\left(
g_{h,k}(x, 2 \pi ( \alpha - iv)) \right) \, dv .
\end{align*}
We begin with the bounds
\begin{eqnarray*}
\lefteqn
{
\left|
\int_{- \frac{1}{ k (k+ k')}}^{\frac{1}{  k(k+k'')}}
\exp
\left(
\frac{  L_k(x)^3}{ 8 \pi^2 ( \alpha - iv)^2} + 2 \pi n ( \alpha - iv)
\right)
\,
\exp\left(
g_{h,k}(x, 2 \pi ( \alpha - iv)) \right) \, dv 
\right|
}
\\
&&
\leq
\int_{- \frac{1}{ k (k+ k')}}^{\frac{1}{  k(k+k'')}}
\left|
\exp
\left(
\frac{  L_k(x)^3}{ 8 \pi^2 ( \alpha - iv)^2} + 2 \pi n ( \alpha - iv)
\right)
\right|
\,
\left|
\exp\left(
g_{h,k}(x, 2 \pi ( \alpha - iv)) \right)
\right|
 \, dv 
 \\
 &&\leq
 \int_{- \frac{1}{ k (k+ k')}}^{\frac{1}{  k(k+k'')}}
\,
\exp
\left(
- a_{X} n^{2/3} + \frac{3}{2} n^{2/3} \Re L_m(x) 
\right)
\,
\left|
\exp\left(
\frac{ 2 \sqrt{2}}{ 1 - M_X} \delta^2 n^{2/3} + o_{X,\delta}(1)
\right)
\right|
 \, dv   
\end{eqnarray*}
where we used Lemma \ref{lemma:g_estimate}.
Hence a  full bound for the sum over $h/k \in F_N, k \neq m$  is
\begin{eqnarray*}
\lefteqn
{
2^{1/12}
 \exp\left(
  - a_{X,\delta} n^{2/3} 
   + \frac{3}{2} n^{2/3} \Re L_m(x)
  + \frac{ 2 \sqrt{2}}{ 1 - M_X} \delta^2 n^{2/3} 
  \right.
  }
  \\
  &&
  \qquad\qquad
  \left.
  + \quad
  \delta^2 n^{2/3} \frac{ \zeta(3)}{16}  - \delta^2 n^{2/3}  \frac{ \ln(1-M_X)}{16}
  + O_{X,\delta}(1)
\right) 
\,
\left(
 \frac{1}{ k (k+ k')} + \frac{1}{  k(k+k'')}
\right)
 .
\end{eqnarray*}
where the last factor is  the  length of the interval of integration.
Note that this bound holds for any $h/k \in F_N$ with $k \neq m$.

Let $\delta_0>0$ be chosen so that
\begin{equation}\label{eq:delta0}
a_{X}- \frac{ 2 \sqrt{2}}{ 1 - M_X}\delta_0 -\frac{ \zeta(3)}{16}\delta_0 +  \frac{ \ln(1-M_X)}{16}\delta_0 > 0 .
\end{equation}
Set $\eta= a_{X}- \frac{ 2 \sqrt{2}}{ 1 - M_X}\delta_0 -\frac{ \zeta(3)}{16}\delta_0
 +  \frac{ \ln(1-M_X)}{16}\delta_0 >0$ so we now write    new full bound as
 \[
2^{1/12} \exp\left(   \, 
    (\tfrac{3}{2} \Re L_m(x)  - \eta) n^{2/3} + O_{X,\delta_0}(1)
\right).
\]
The bound for the contributions over all the minor arcs is
\[
\left|
\sum_{ h/k \in F_N, k \neq m}
\omega_{h,k,n}(x) I_{h,k,n}(x)
\right|
\leq
2^{1/12}
\exp\left(   \, 
    (\tfrac{3}{2} \Re L_m(x)  - \eta) n^{2/3} + O_{X,\delta_0}(1)
\right)
\]
since the sum of the lengths of all the minor arcs is less than $1$. The proof is complete.
\end{proof}

\begin{proof} 
We now complete the proof of Theorem \ref{thm:Main1}.
Choose the order $N$ of the Farey fractions to be $\lfloor \delta_0 n^{1/3} \rfloor$ where $\delta_0$ is given
in Lemma \ref{lemma:delta0}.
By equation (\ref{eq:Q_n(x)_sum}), we write
\[
Q_n(x) = 
\omega_{1,m,n}(x) I_{1,m,n}(x) + \sum_{ h/k \in F_N, k \neq m } \omega_{h,k,n}(x) I_{h,k,n}(x)
\]
By Proposition \ref{prop:I_hm_saddle} and Lemma \ref{lemma:I''_estimate},  we see that
\[
\omega_{1,m,n}(x) I_{1,m,n}(x) = \omega_{1,m,n}(x)  \, \sqrt{  \frac{ L_m(x)}{ 6 \pi n^{4/3} } }\exp(\tfrac{3}{2} n^{2/3} L_m(x)  ) 
\,  \left(1+O_X( n^{-1/3}) \right)
\]
which dominates the contribution of the sum over $h/k \in F_N, k \neq m$ by Lemma \ref{lemma:delta0}. 
Finally, we observe that
\[
\omega_{1,1,n}(x) = \sqrt[12]{1-x}, \quad
\omega_{1,2,n}(x) = (-1)^n \, \sqrt[24]{ 1-x^2} \, \sqrt[8]{ \frac{ 1-x}{1+x}} 
\]
to complete the proof.
\end{proof}

\section{ Asymptotics on the Boundaries of the Phases $R(1)$ and $R(2)$}

Let $x \in X \subset \{ x : \Re L_1(x) = \Re L_2(x), \, x \neq x^* \}$ be compact. Then the choice of $\alpha$ in equation (\ref{eq:alpha})  must satisfy
\[
\alpha = \frac{1}{2\pi n^{1/3}} \Re L_1(x) =  \frac{1}{2\pi n^{1/3}} \Re L_2(x) 
\]
so both terms $\omega_{1,1,n}I_{1,1,n}(x)$ and $\omega_{1,2,n}(x) I_{1,2,n}(x)$ will contribute to the dominant asymptotics of $Q_n(x)$.
A similar modification must be used for the asymptotics on the compact subsets of $(x^*,0)$.  In  this case, the two terms are
 $\omega_{1,1,n}I_{1,1,n}(x)$ and its complex conjugate which combine to give an oscillatory term. Further simplification occurs because
 the argument of $L_1(x)$ is constant on $(x^*,0)$. We record the results:

\begin{theorem}
Let $x^*$ be the negative real number given in Theorem \ref{thm:phase}.
\newline
(a)
Let $x \in X \subset (x^*,0)$ be compact. Then
\begin{align*}
Q_n(x)
&=
 2^{7/6} \,
 \sqrt[12]{ 1-x} 
 \, \frac{1}{   \sqrt{ 6 \pi n^{4/3}  }} \,
 |  Li_3(x  )|^{1/6} 
{\rm exp} \left( \frac{3}{4} \sqrt[3]{2} n^{2/3} |Li_3(x)|^{1/3}  \right)
\\
&\qquad \qquad \qquad \times
\,
 \, \left( \cos
\left(
\frac{3 \sqrt{3}}{4} \sqrt[3]{2} n^{2/3} |Li_3(x)|^{1/3} + \frac{\pi}{6}
\right)+ O_X( n^{-1/3}) \right)   .
\end{align*}
(b)
Let $x \in X \subset \{ x : \Re L_1(x) = \Re L_2(x), \, x \neq x^* \}$ be compact. Then
\begin{align*}
Q_n(x)
&=
\sqrt[12]{ 1-x} \,
\sqrt{  \frac{ L_1(x) }{6 \pi  n^{4/3} }  } \exp \left( \tfrac{3}{2} n^{2/3} L_1(x)  \right)
\,  \left( 1+ O_X( n^{-1/3}) \right)  
\\
& \qquad + \,
(-1)^n
\sqrt[24]{ 1-x} \,
\sqrt[8]{  \frac{ 1-x}{1+x} } \,
\sqrt{  \frac{ L_2(x) }{6 \pi  n^{4/3} }  } \exp \left( \tfrac{3}{2} n^{2/3} L_2(x)  \right)
\,  \left( 1+ O_X( n^{-1/3}) \right)   .
\end{align*}
\end{theorem}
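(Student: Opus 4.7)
The plan is to rerun the circle-method decomposition (\ref{eq:Q_n(x)_sum}) from Section \ref{section:asymptotics}, but now to retain \emph{two} Farey fractions whose contributions share the dominant exponential order $\exp(\tfrac{3}{2}n^{2/3}\Re L_1(x)) = \exp(\tfrac{3}{2}n^{2/3}\Re L_2(x))$ on the relevant sets.

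\emph{Part (a).}  On a compact $X \subset (x^*,0)$ the real variable $x$ still lies in the phase $R(1)$, so the choice $\alpha = \Re L_1(x)/(2\pi n^{1/3})$ of equation (\ref{eq:alpha}) and the minor-arc estimate of Lemma \ref{lemma:delta0} (with $m=1$) apply without change.  What differs from Theorem \ref{thm:Main1}(a) is that $x^1 = x < 0$, which forces us into case (b) of Proposition \ref{prop:I_hm_saddle}: the saddle $B(z)$ has two equal real-axis maxima at $z = \pm \Im L_1(x)$, and the saddle evaluation produces a main term together with its complex conjugate.  The remaining task is purely trigonometric.  Writing $L_1(x) = \sqrt[3]{2}\,|Li_3(x)|^{1/3} e^{i\pi/3}$ (principal cube root of a negative real), one single-saddle contribution $\omega_{1,1,n}(x)\sqrt{L_1(x)/(6\pi n^{4/3})}\exp(\tfrac{3}{2}n^{2/3}L_1(x))$ has modulus $\sqrt[12]{1-x}\cdot 2^{1/6}|Li_3(x)|^{1/6}(6\pi n^{4/3})^{-1/2}\exp\bigl(\tfrac{3}{4}\sqrt[3]{2}\,n^{2/3}|Li_3(x)|^{1/3}\bigr)$ and argument $\pi/6 + \tfrac{3\sqrt{3}}{4}\sqrt[3]{2}\,n^{2/3}|Li_3(x)|^{1/3}$; twice its real part is the stated cosine expression with the coefficient $2^{7/6}$.

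\emph{Part (b).}  On $X \subset \{\,\Re L_1(x) = \Re L_2(x),\ x \neq x^*\,\}$ the single choice $\alpha = \Re L_1(x)/(2\pi n^{1/3}) = \Re L_2(x)/(2\pi n^{1/3})$ serves as the integration radius for both $m=1$ and $m=2$, and consequently the Farey sum (\ref{eq:Q_n(x)_sum}) contains two equally dominant contributions $\omega_{1,1,n}(x)I_{1,1,n}(x)$ and $\omega_{1,2,n}(x)I_{1,2,n}(x)$.  The exclusion $x \neq x^*$, in tandem with the fact (Theorem \ref{thm:phase}) that the boundary curve meets the real line only at $x^*$, guarantees that neither $x$ nor $x^2$ is a non-positive real, so Proposition \ref{prop:I_hm_saddle}(a) applies to both $I'_{1,1,n}(x)$ and $I'_{1,2,n}(x)$.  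Inserting the closed forms $\omega_{1,1,n}(x) = \sqrt[12]{1-x}$ and $\omega_{1,2,n}(x) = (-1)^n\sqrt[24]{1-x^2}\sqrt[8]{(1-x)/(1+x)}$ then delivers the two summands exactly as stated.

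The main obstacle, and essentially the only genuinely new analytic ingredient, is adapting Lemma \ref{lemma:delta0}: the minor-arc sum now ranges over $h/k \in F_N$ with $k \notin \{1,2\}$, and the argument there rests on the strict bound
\begin{equation*}
\Re L_k(x) < \Re L_1(x) = \Re L_2(x) \qquad (k \geq 3,\ x \in X).
\end{equation*}
This inequality is exactly what Theorem \ref{thm:phase} delivers, since $R(1)$ and $R(2)$ are the only nonempty phases of $\{L_k\}$ in $D$; by continuity and compactness of $X$ there is a uniform gap $\inf_{x \in X}\bigl(\Re L_1(x) - \max_{k \geq 3}\Re L_k(x)\bigr) \geq \eta_0 > 0$.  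With this gap in hand, the proof of Lemma \ref{lemma:delta0} transcribes verbatim and produces a bound of order $\exp\bigl((\tfrac{3}{2}\Re L_1(x) - \eta)n^{2/3} + O_{X,\delta_0}(1)\bigr)$ on the combined minor-arc contribution, which is exponentially smaller than either dominant term and closes the argument.
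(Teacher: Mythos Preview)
Your proposal is correct and follows essentially the same route as the paper's own (brief) argument: for part (a) you invoke Proposition \ref{prop:I_hm_saddle}(b) to obtain the pair of conjugate saddle contributions that combine into the cosine, and for part (b) you retain both $k=1$ and $k=2$ as major arcs with the shared radius $\alpha$ and apply Proposition \ref{prop:I_hm_saddle}(a) to each. Your treatment is in fact more detailed than the paper's sketch, particularly in spelling out the adaptation of the minor-arc bound (now over $k\notin\{1,2\}$) and the trigonometric bookkeeping in part (a); one small imprecision is that the claim ``$x^2$ is not a non-positive real on the boundary'' does not follow merely from the boundary having $x^*$ as its only real point (that rules out real $x$, not purely imaginary $x$), but rather from the boundary lying in the left half plane.
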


Note that in part (a) above, we are  abusing notation since this relation holds only if $\Re( \omega_{1,1,n}(x) I_{1,1,n}(x))$ is nonzero. On the other hand, if 
$\Re( \omega_{1,1,n}(x) I_{1,1,n}(x))=0$ holds, $Q_n(x)$ reduces to  $O( \omega_{1,1,n}(x) I_{1,1,n}(x) n^{-1/3})$.

Finally, we remark that  the asymptotics of $Q_n(x)$ over $(-1,1)$ has three separate regimes: $(-1,x^*)$, $(x^*,0)$, and $(0,1)$. On $(-1,x^*)$ and $(0,1)$, the polynomials have exponential growth
while on $(x^*,0)$ the polynomials are oscillating.  In \cite{Boyer_Parry_EJC}, this asymptotic was found to give good approximations to the zeros of $Q_n(x)$ in that interval. 
The partition polynomials built from the usual partition numbers and studied in \cite{Boyer_partition} do not have this oscillatory behavior.

\end{document}